\newtheorem{theorem}{Theorem}[section]
\newtheorem{proposition}[theorem]{Proposition}
\theoremstyle{definition}
\numberwithin{equation}{section}
\renewcommand{\d}{\mathrm{d}}
\newcommand{\D}{\mathrm{D}}
\newcommand{\Rz}{{\mathbb R}}
\newcommand{\Nz}{\mathbb{N}}
\newcommand{\disp}{\displaystyle}
\newcommand{\haz}{\widehat}
\newcommand{\ove}{\overline}
\DeclareMathOperator*{\argmin}{arg\,min}
\newcommand\twoarrows{%
        \mathrel{\vcenter{\mathsurround0pt
                \ialign{##\crcr
                        \noalign{\nointerlineskip}$\Rightarrow$\crcr
                        \noalign{\nointerlineskip}$\not
                        \Leftarrow$\crcr 
                }%
        }}%
}
\newcommand{\e}{{\rm e}}
\begin{document}  

\title[Two time discretizations schemes for gradient flows]{Two 
  structure-preserving  
time discretizations for gradient flows}
\author[A. J\"ungel]{Ansgar J\"ungel}
\address[Ansgar J\"ungel]{Institute for Analysis and Scientific Computing, Vienna University of Technology, Wiedner
Hauptstra\ss e 8-10, 1040 Wien, Austria}
\email{juengel@tuwien.ac.at}
\urladdr{http://www.asc.tuwien.ac.at/$\sim$juengel}
\author[U. Stefanelli]{Ulisse Stefanelli} 
\address[Ulisse Stefanelli]{Faculty of Mathematics, University of Vienna, 
Oskar-Morgenstern-Platz 1, 1090 Wien, Austria  and  Istituto di Matematica
Applicata e Tecnologie Informatiche \textit{{E. Magenes}}, v. Ferrata 1, 27100
Pavia, Italy.}
\email{ulisse.stefanelli@univie.ac.at}
\urladdr{http://www.mat.univie.ac.at/$\sim$stefanelli}
\author[L. Trussardi]{Lara Trussardi}
\address[Lara Trussardi]{Faculty of Mathematics, University of Vienna, 
Oskar-Morgenstern-Platz 1, 1090 Wien, Austria.}
\email{lara.trussardi@univie.ac.at}
\urladdr{http://www.mat.univie.ac.at/$\sim$trussardi}

\date{\today}

\subjclass[2010]{35K55, 58D25}
\keywords{Gradient flow, structure-preserving
  time discretization, GENERIC flows, curves of maximal slope}

\begin{abstract}
 
 The equality between dissipation and energy drop is a structural property
of gradient-flow dynamics. 
The classical implicit Euler scheme
fails to reproduce  this equality at the discrete level.  We discuss
two modifications of the Euler
scheme satisfying an exact energy equality at the discrete
level. Existence of discrete solutions and their convergence as the
fineness of the partition goes to zero are discussed. Eventually, we
address extensions to generalized gradient flows, GENERIC
flows, and curves of maximal slope in metric spaces.
\end{abstract}

\maketitle


\section{Introduction} 

Gradient flows are the paradigm of dissipative
evolution and arise ubiquitously in
applications \cite{Ambrosio08}. In abstract terms, they are formulated
as  
\begin{equation}
  \label{eq:gf}
  u'(t) + \D \phi(u(t)) = 0 \quad \text{for a.e.} \ t \in (0,T), \quad u(0)=u_0,
\end{equation}
where the solution $t \mapsto u(t)\in H$ is a trajectory in a Hilbert
space $H$, the potential $\phi$ is given, and $u_0$ is
a prescribed initial datum. We assume $\phi$ to be smooth for the
purpose of this introductory discussion, so that $\D \phi$ is here
the Fr\'echet differential. Let us however anticipate
that the analysis will encompass
nonsmooth situations as well.

Problem \eqref{eq:gf} arises in a variety of applications including heat
conduction, the Stefan problem, the Hele-Shaw cell, porous media, parabolic
variational inequalities, some classes of ODEs with obstacles, degenerate
parabolic PDEs, and the mean curvature flow for Cartesian graphs, among many
others \cite{Noch-Sav-Verdi2}. Correspondingly,  \eqref{eq:gf} has
attracted constant attention during the last half century, starting
from the seminal contributions by {\sc K\= omura} \cite{Komura67}, {\sc
  Crandall-Pazy}\cite{Crandall-Pazy69}, and {\sc Br\'ezis}
\cite{Brezis71,Brezis73}.

Solutions of \eqref{eq:gf} fulfill the {\it
  energy equality} 
  \begin{equation}
    \label{eq:balance}
    \phi(u(t)) + \int_s^t \| u'(r)\|^2\d r =\phi(u(s))   
  \end{equation}
for all $[s,t]\subset [0,T]$. This equality expresses the fact that the energy drop $ \phi(u(s))
-\phi(u(t))$  along the trajectory in the time interval $[s,t]$
{\it exactly} corresponds to the squared $L^2$ norm of the velocity
$u'$. Here and in the following, $\|\cdot\|$ denotes the norm in $H$.
For all  $u$ solving \eqref{eq:gf}, the energy equality \eqref{eq:balance} can
be rewritten as
\begin{equation}
    \label{eq:generic}
    \phi(u(t)) + \frac12\int_s^t \| u'(r)\|^2\d r +\frac12\int_s^t
    \| \D \phi(u(r))\|^2\d r =\phi(u(s))  
  \end{equation}
for all $[s,t]\subset [0,T]$. In particular, \eqref{eq:balance} and \eqref{eq:generic} are equivalent for
solutions to \eqref{eq:gf}. 

If $u$ does not solve \eqref{eq:gf},
conditions \eqref{eq:balance} and \eqref{eq:generic} are not
equivalent anymore. On the one hand, relation \eqref{eq:generic}
and $u(0)=u_0$ imply that
\begin{align*}
 & \frac12\int_0^T\|u' + \D \phi(u)\|^2 \d r
=  \frac12\int_0^T\left(\|u' \|^2 +2(\D\phi(u),u')
  + \|\D \phi(u)\|^2\right)\d r \nonumber\\
&= \frac12\int_0^T\left(\|u' \|^2 + 2(\phi\circ u)'
  + \|\D \phi(u)\|^2 \right)\d r \nonumber\\
&= \phi(u(T))-\phi(u(0)) +
  \frac12\int_0^T\|u' \|^2\d r +\frac12 \int_0^T\|\D \phi(u)\|^2\d r = 0,
\end{align*}
so that $u$ solves \eqref{eq:gf} and hence fulfills
\eqref{eq:balance} as well. 

On the other hand, \eqref{eq:balance} does not imply
\eqref{eq:gf}. In order to check this, take $\phi(u)=u_1+u_2$ for all
$u=(u_1,u_2)\in \Rz^2=H$ and letting $u(t)=(-t,0)$ for all $t\geq
0$, we see that  $u$ fulfills \eqref{eq:balance} but does not solve \eqref{eq:gf}.
All in all, we have verified that 
$$\eqref{eq:gf} \ \Leftrightarrow \ \eqref{eq:generic} \  \twoarrows \ 
\eqref{eq:balance} .$$

The analysis of Problem \eqref{eq:gf} often relies on a time discretization.
Let a partition $\{0=t_0<t_1<\dots<t_N=T\}$ be given and indicate by $\tau_i =
t_i - t_{i-1}$ its time step. A classical
discretization  of \eqref{eq:gf} 
is the {\it implicit Euler scheme}. Given $u_0 $, this reads as
$$ \frac{u_i - u_{i-1}}{\tau_i} + \D \phi (u_i)= 0\quad
\text{for} \ \ i=1,\dots,N, $$
which can be equivalently reformulated in variational terms as
$$ u_i \in \argmin_{u \in H} \left
  (\phi(u)+\frac{\tau_i}{2}\left\|\frac{u-u_{i-1}}{\tau_i}\right\|^2\right)\quad
\text{for} \ \ i=1,\dots,N.$$
Minimality implies stability of the scheme. Under
appropriate assumptions on $\phi$, convergence follows
as the fineness of the partition goes to zero.
 
Unfortunately, the Euler scheme fails to reproduce exact
dissipation dynamics at the discrete level: 
The dissipation $\|(u_i-u_{i-1})/\tau_i\|^2$ of the $i$-th time step does
not correspond to the energy drop $\phi(u_{i-1})- \phi(u_{i})$ in
the same step. Indeed, 
energy-{\em dissipating} schemes for gradient systems were developed quite intensively 
in the literature. Examples include the convex splitting method,
popularized in \cite{Eyr98}, algebraically stable Runge-Kutta methods
\cite{HaLu14}, and the discrete variational derivative method
\cite{FuMa11}. The exact replication of the dissipation dynamics
is nevertheless a desirable feature in view of developing 
structure-preserving algorithms.

The aim of this note is to analyze two variants of the Euler scheme
reproducing at the discrete level the energy equalities
\eqref{eq:balance} and \eqref{eq:generic}, respectively.
The first of such modifications can be traced back at least to {\sc Gonzalez} \cite{
Gonzales}, see also \cite{condette,Romero}, and will hence be termed {\it Gonzalez
scheme} here. Given $u_0$, for all $i=1,\dots,N$, the Gonzalez
scheme defines $u_i$ starting from the previous value $u_{i-1}$ by
letting $u_i  = u_{i-1} $ in case $\D \phi(u_{i-1})=0$ and solving 
   \begin{align}
  &  \frac{u_i - u_{i-1}}{\tau_i} + \D \phi(u_i)  \nonumber\\
  &+
    \big(\phi(u_i)- \phi(u_{i-1}) - (\D \phi(u_i), u_i
    - u_{i-1})\big)\frac{u_i - u_{i-1}}{\|u_i -
      u_{i-1}\|^2} = 0 \label{eq:romero02}
  \end{align} 
if $\D \phi(u_{i-1})\not =0$,
  where $(\cdot,\cdot)
$ is the scalar product in
$H$. Indeed, by taking the scalar product of \eqref{eq:romero02} with
$u_i - u_{i-1} $, the terms containing $\D \phi(u_i)$
cancel out and we are left with 
\begin{equation*}
  \phi(u_i) + \tau_i \left\| \frac{u_i - u_{i-1}}{\tau_i}\right\|^2= \phi(u_{i-1}).
\end{equation*}
By summing up the latter over $i$, one obtains a  discrete version of
the energy balance \eqref{eq:balance}. This implies in particular the
stability of the Gonzalez scheme. 
We remark that the original Gonzalez scheme uses $\D\phi((u_i+u_{i-1})/2)$
instead of $\D\phi(u_i)$ in \eqref{eq:romero02}, leading in both cases to an 
implicit scheme. To our knowledge, no 
analysis for the solvability of \eqref{eq:romero02} nor for the
convergence of the scheme  has been presented so far. We fill this gap in
Section \ref{sec:generic}, where we prove that for sufficiently smooth functions
$\phi$ and finite-dimensional spaces $H$, the Gonzalez scheme
admits solutions (Theorem \ref{thm:r1}) that converge with an 
explicit and sharp a-priori rate (Theorem \ref{thm:r2}). 

The second modification of the Euler scheme reproduces at the
discrete level relation \eqref{eq:generic}. 
The scheme is inspired by  the approach to steepest-descent dynamics
by {\sc De Giorgi}~\cite{DeGiorgi}, and we hence refer to it as {\it De
  Giorgi scheme} in the following. Given $u_0$, the De Giorgi scheme reads as
\begin{equation}
  \label{eq:dg}
  \phi(u_i) + \frac{\tau_i}{2}\left\| \frac{u_i - u_{i-1}}{\tau_i}\right\|^2
    +\frac{\tau_i}{2}\| \D\phi(u_i)\|^2 - \phi(u_{i-1})=0
\end{equation}
for $i=1,\dots,N$.
By summing up the latter relation over $i$, one readily obtains a discrete
version of \eqref{eq:generic}, which directly proves stability. The De Giorgi scheme reduces to a single scalar equation and allows for an existence (Theorem
\ref{thm:existence}) and a convergence proof (Theorem \ref{thm:convergence}),
even in nonsmooth and infinite-dimensional situations, see Section
\ref{sec:dg}. In addition, sharp error estimates can be derived in the
finite-dimensional case. 

Instead of solving \eqref{eq:dg}, one could simply minimize its
residual by minimizing the functional defined by  the left-hand side. 
As a by-product of our analysis, we show that this
alternative variational approach with respect to the Euler scheme can
also be considered, giving convergence and, when restricted to finite
dimensions, optimal error bounds. 

Remarkably, the De
Giorgi scheme can serve as an a-posteriori tool to check the convergence
of time-discrete approximations $u_i$, regardless of the method used to
generate them. Indeed, the analysis of
the (positive parts of the) residuals to \eqref{eq:dg} can reveal
whether the approximation converges to the unique solution of
\eqref{eq:gf}, see condition \eqref{eq:condi} below.

Eventually, the De Giorgi scheme can be extended to other nonlinear 
evolution settings. In Section
\ref{sec:ext} we comment on its application to the case of 
 generalized gradient flows, GENERIC flows, and curves of maximal slope in
metric spaces.

Before moving on, let us briefly put our contribution in context. 
Numerical schemes conserving first integrals (in particular, the energy)
can be found in \cite{Gonzales,QuMc08,Romero2,Romero3}.
See \cite{SMSF15} for a recent reference and  
\cite{CGMMOOQ12} for a review. In particular, the Gonzalez scheme falls within 
the general class of  {\it discrete-gradient methods} along with the choice 
\begin{equation*}
  \nabla_d\phi(u,v) :=
      \big(\phi(u){-} \phi(v) - (\D \phi(u), u-v)\big)\displaystyle\frac{u-v}{\|u-v\|^2}
\end{equation*}
for the {\it discrete gradient}, for $u \not = v $.
These methods are specifically tailored to exactly reproduce
dissipation dynamics. The discrete gradient is designed to satisfy
a discrete chain rule, which in turn delivers numerical integrators replicating the
dissipation property of the continuous system 
\cite{SMSF15}. In the specific case of the Gonzalez scheme, such
discrete chain rule reads
(see \cite{Gonzales} or \cite[Formula (5.10)]{HLW06})
$\phi(u)-\phi(v) = \nabla_d\phi(u,v)\cdot(u-v).$
A systematic study of discrete gradient methods can be found in 
\cite{Gonzales,MQR99}. 

Discrete-gradient methods have been applied to a large class of equations, not
necessarily of gradient-flow type. Examples are schemes for conservative 
partial differential equations fulfilling a discrete conservation law \cite{McQu14}
and linearly-fitted numerical schemes for conservative
wave equations \cite{LWS17}. The latter method also preserves the dissipation
structure of dissipative wave equations. Moreover, the discrete gradient method 
was applied to subdifferentials in \cite{BKS08}.

As the implicit Euler method is of first order only, one may look for
higher-order variational schemes. Two second-order schemes, the variational
implicit midpoint and the extrapolated variational implicit Euler schemes, 
are proposed in \cite{LeTu17}. A variational BDF2 
(two-step Backward Differentiation Formula) method is analyzed in \cite{MaPl17}.
While second-order convergence is expected in a smooth Hilbert space case,
the convergence of rate one-half is shown in the general metric setting. 
All the mentioned results do not replicate the exact energy
dissipation dynamics of the gradient flow problem.

One may ask whether the Gonzalez and De Giorgi scheme, analyzed in this paper, 
can be extended to give higher-order convergence. 
While higher-order consistency and convergence
have been proved for discrete gradient methods for ODEs and the discrete
variational derivative method for PDEs \cite[Section 1.4]{FuMa11},
we are not aware of higher-order generalizations of our schemes.
Moreover, as the result of \cite{MaPl17} shows, the lack of smoothness
may decrease the optimal convergence order. For quadratic potentials $\phi$,
the G-stability by Dahlquist allows for energy-dissipating schemes, but
requires to redefine the energy as a function of $(u_i,u_{i-1})$ instead of
$u_i$ alone \cite{JuMi12}. General energy-dissipative Runge-Kutta schemes 
are studied in \cite{JuSc17}, but again, they do not replicate the  exact energy
dissipation dynamics of the problem.

 Our primary focus is that of approximating the
limiting continuous dynamics. Still, one has to mention that the discrete schemes discussed here may
bear some interest in connection with optimization as well.  Indeed,
the unconstrained minimization of the potential $\phi$ can be tackled
by considering iterative approximations. In the case of a convex
$\phi$, the easiest example in this class is the
{\it proximal algorithm} $u_i = \text{prox}_{\tau_i\phi}(u_{i-1})$, where
$\text{prox}_{\tau_i\phi}=({\rm id} + \tau_i\partial \phi)^{-1}: H \to H$. This corresponds to iterations of the
implicit Euler method, starting from an initial guess $u_0$. If the
set of minimum points ${\rm
  argmin}\, \phi $ is not empty and $\sum\tau_i=\infty$, the proximal
algorithm weakly converges to a minimizer \cite[Props.~6.1-2, pp. 99-100]{Pey}

A variety of modifications of the proximal algorithm has been
advanced in order to solve optimization problems with specific
structure, or with higher efficiency \cite{BC,Bert}. A possibility is
that of augmenting the input of the proximity operator
$\text{prox}_{\tau_i\phi}$ by specific corrections. This is indeed the
case of the Gonzales scheme, for it corresponds to the
position
$$u_i = \text{prox}_{\tau_i\phi}\left(u_{i-1} -\tau_i
  \nabla_d\phi(u_i,u_{i-1}) \right).$$ 
 Note, however, that the small
correction $\tau_i\nabla_d\phi(u_i,u_{i-1})  $ depends on $u_i$ as well, which ultimately requires an implicit
updating step. A discussion on the relation between discrete and continuous gradient-flow
dynamics in presence of higher-order corrections can be
found in \cite{Shi}. 

Starting from Nesterov's result \cite{Nesterov},
acceleration methods based on inertial gradient systems of the form 
$$u'' (t) + \alpha(t)u'(t)+ \D \phi(u(t))\ni 0$$
where $\alpha$ is a suitably chosen damping coefficient vanishing for $t\to\infty$, are currently
attracting attention,
see  
\cite{Attouch17,Attouch18,Bot,Cabot09} among many others. One could
consider extensions of the Gonzales and the De Giorgi schemes to the
case of
inertial gradient systems as well. In order to replicate
the equality between dissipation and energy drop, now
including the kinetic energy, one has to introduce a higher-order correction in the
time-increment term as well.
We shall develop these considerations elsewhere.

In order to make the connection with optimization algorithms complete,
one would have to study the convergence of the discrete sequence $u_i$
as $i\to \infty$. This would correspond to investigate the long-time
discrete
dynamics and eventually connecting it with the corresponding continuous
counterpart. Although some of our results hold on the whole time
semiline as well, such a long time analysis is presently beyond the scope
of this paper. We present some evidence of the potential of
these methods in a comparative discussion in Section
\ref{sec:compare} below.


\section{Preliminaries}

Let us introduce our notation and recall some basic results.
Denote by $H$ a real separable Hilbert space with scalar
product $(\cdot,\cdot)$ and associated norm $\| \cdot \|$. Let
$\phi: H \to (-\infty,\infty]$ be a proper and lower semicontinuous functional,
not necessarily convex, and let $u_0\in D(\phi)=\{u \in H \ : \ \phi(u) \not = \infty\}$
({\it essential domain}). The  {\it Fr\'e\-chet subdifferential} $\partial \phi(u)$
of $\phi$ at $u \in
D(\phi)$ is the set of points $\xi \in H$ such that 
$$\liminf_{v \to u}\frac{\phi(v) - \phi(u) - (\xi,v-u)}{\| v-u
  \|}\geq 0.$$
The latter reduces to the classical gradient ${\rm D}\phi(u)$ in case
$\phi$ is Fr\'e\-chet differentiable at $u$ and to the subdifferential
in the sense of convex analysis if $\phi$ is convex. If $\phi=\phi_1 +
\phi_2$, where $\phi_1$ is convex, proper, and lower semicontinuous, and
$\phi_2\in C^1(H)$, we have $\partial \phi=\partial \phi_1 + \D \phi_2$. 
We denote by $D(\partial \phi)$
the essential domain $D(\partial \phi) = \{u \in H \ : \ \partial
\phi(u) \not = \emptyset\}$ and remark that $\partial\phi(u)$ is
either convex or empty.

The well-posedness of Problem \eqref{eq:gf} for convex functions
$\phi$ is classical \cite{Brezis73}.  
Nonconvexity can also be
allowed, as long as $\phi$ has compact sublevels, see
{\sc Rossi \& Savar\'e} \cite{Rossi-Savare06} for some comprehensive
existence theory. Crucial assumptions are the conditional
strong-weak closeness of  $\partial \phi$ and the
validity of the chain rule.
These hold, for instance, if $\phi$ is a $C^{1,1}$
perturbation of a convex functional,   which includes the case of $\lambda$-convex
functionals \cite{Ambrosio08}. In addition, a suitable class of
dominated, not $C^{1,1}$ perturbations can be considered as well
\cite{Rossi-Savare06}.  For the sake of definiteness, we focus
here on the finite-time case $t \in [0,T]$. Let us, however, mention
that some of our results hold
for gradient flows on the semiline $t \in [0,\infty)$ as well. We will
comment on this issue along the text. 

Existence results for gradient flows often take the moves from time
discretizations.
 In the following, we will consider families of
partitions
$$\{0=t_0^n<t_1^n<\dots<t^n_{N^n}\}$$
of the interval $[0,T]$, indicate their time steps by $\tau^n_i =
t^n_i - t^n_{i-1}$, and denote their fineness by $\tau^n = \max_i
\tau^n_i$. Correspondingly, one is interested in finding a
time-discrete solution $u^n_i\in D(\partial \phi)$ with $u_0^n=u_0$ via some 
time-discrete scheme. 

By defining the affine functions $\ell_j^n (t) = (t-t_{j-1}^n)/\tau_j^n$,
for any given vector $w_i^n$, $i=0,\dots, N^n$, we use the notation 
$\haz w^n, \, \ove w^n : [0,\infty) \to H$  for the piecewise affine and
the backward constant interpolants of the values $w_i^n$ on the partition, namely
\begin{align*}
  &\haz w^n(0)=\ove w^n(0)=w_0^n, \quad \haz w^n(t) = \ell_j^n(t)w_j^n +
  (1-\ell_j^n(t))w_{j-1}^n, \\
& \ove w^n(t)
= w_j^n \quad \mbox{for all }t \in (t_{j-1}^n,t_j^n].
\end{align*}

By letting $\tau^n\to 0$ one is then asked to show the convergence of
the time-discrete trajectory $\haz u^n$ to a solution to \eqref{eq:gf}.

A {\it caveat} on notation: In the following, we use the same symbol $C$ to indicate a
generic positive constant, possibly depending on the data and varying from line to
line.  Occasionally, we may explicitly indicate
dependences of a constant by subscripts. 

\section{The Gonzalez scheme}\label{sec:generic}
 
As mentioned in the introduction, the Gonzalez scheme can be traced
back at least to
\cite{Gonzales} and has been already applied in various thermomechanical
contexts \cite{Betsch,Garcia,Gonzales2,Romero,Romero2,Romero3}. 
We aim here at providing some theoretical analysis by focusing on
solvability (Theorem \ref{thm:r1}) and convergence (Theorem \ref{thm:r2}). 
Before moving on, let us equivalently
rewrite relation \eqref{eq:romero02} as
\begin{equation}
  \label{eq:romero2}
  \phi(u_i) + \tau_i \left\| \frac{u_i - u_{i-1}}{\tau_i}\right\|^2 =
  \phi(u_{i-1}) \ \ \text{and}
  \ \ \D \phi(u_i) \ \ \text{is parallel to} \ \ u_i - u_{i-1}.
\end{equation}
Indeed, a solution to \eqref{eq:romero02} fulfills \eqref{eq:romero2} as
well: The energy equality holds and one reads off the parallelism by
comparing the terms in \eqref{eq:romero02}. On the contrary, if $u_i$
solves \eqref{eq:romero2}, we compute
\begin{align*}
  0 &= \left(\phi(u_i) -\phi(u_{i-1}) + \tau_i \left\| \frac{u_i -
  u_{i-1}}{\tau_i}\right\|^2\right)
  \frac{u_i-u_{i-1}}{\|u_i-u_{i-1}\|^2}\\
&=
\left(\phi(u_i) -\phi(u_{i-1}) + \tau_i \left\| \frac{u_i -
  u_{i-1}}{\tau_i}\right\|^2\right) \frac{u_i-u_{i-1}}{\|u_i-u_{i-1}\|^2}
  \\
&\phantom{xx}{}+ \frac{(\D\phi(u_i),u_i - u_{i-1})}{\|u_i-u_{i-1}\|}
  \frac{u_i-u_{i-1}}{\|u_i-u_{i-1}\|} - \frac{(\D\phi(u_i),u_i - u_{i-1})}{\|u_i-u_{i-1}\|}
  \frac{u_i-u_{i-1}}{\|u_i-u_{i-1}\|} \\
&=\left(\phi(u_i) -\phi(u_i) + \tau_i \left\| \frac{u_i -
  u_{i-1}}{\tau_i}\right\|^2\right) \frac{u_i-u_{i-1}}{\|u_i-u_{i-1}\|^2}
  \\
&\phantom{xx}{}+  \D\phi(u_i) - \frac{(\D\phi(u_i),u_i - u_{i-1})}{\|u_i-u_{i-1}\|}
  \frac{u_i-u_{i-1}}{\|u_i-u_{i-1}\|} 
\end{align*}
which is exactly \eqref{eq:romero02}. We will make use of this
equivalent formulation to prove the following result.

\begin{theorem}[Existence for the Gonzalez scheme]\label{thm:r1} Let $\phi \in
  C^1(\Rz^d)$ be bounded from below and let $\D \phi(u_{i-1}) \not
  =0$. Then there exists $u_i \in \Rz^d\setminus \{u_{i-1}\}$ such that
  \eqref{eq:romero02} holds.
\end{theorem}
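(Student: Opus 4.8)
The plan is to use the equivalent formulation \eqref{eq:romero2} and to build $u_i$ by a constrained minimization combined with a one-dimensional root-finding argument. First I would fix a radius $r>0$ and minimize $\phi$ over the sphere $S_r = \{u \in \Rz^d : \|u-u_{i-1}\| = r\}$. Since $S_r$ is compact and $\phi$ is continuous, a minimizer $u(r)$ exists. The crucial point is that the parallelism requirement in \eqref{eq:romero2} is then automatic: the constraint $\|u-u_{i-1}\|^2 = r^2$ has nonvanishing gradient $2(u-u_{i-1})$ on $S_r$, so the Lagrange multiplier rule at $u(r)$ furnishes some $\lambda \in \Rz$ with $\D\phi(u(r)) = 2\lambda\,(u(r)-u_{i-1})$, i.e.\ $\D\phi(u(r))$ is parallel to $u(r)-u_{i-1}$.

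It then remains to tune $r$ so that the discrete energy equality holds. Writing $m(r) := \phi(u(r)) = \min_{S_r}\phi$ and
$$ g(r) := m(r) + \frac{r^2}{\tau_i} - \phi(u_{i-1}), $$
any $r_0>0$ with $g(r_0)=0$ yields, upon setting $u_i := u(r_0)$, a point satisfying both relations in \eqref{eq:romero2}, hence solving \eqref{eq:romero02}, with $u_i \neq u_{i-1}$ because $r_0>0$. Parametrizing the points of $S_r$ as $u_{i-1}+r\omega$ with $\omega$ ranging over the fixed unit sphere of $\Rz^d$, the objective $(r,\omega)\mapsto \phi(u_{i-1}+r\omega)$ is jointly continuous and the minimization in $\omega$ is over a compact set, so the value function $m$, and therefore $g$, is continuous on $(0,\infty)$ by Berge's maximum theorem.

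I would then produce a sign change of $g$. For small $r$ I would probe the sphere along the normalized steepest-descent direction $\omega_0 = -\D\phi(u_{i-1})/\|\D\phi(u_{i-1})\|$, which is well defined since $\D\phi(u_{i-1})\neq 0$: a first-order Taylor expansion of the $C^1$ function $\phi$ gives $m(r) \leq \phi(u_{i-1}+r\omega_0) = \phi(u_{i-1}) - r\|\D\phi(u_{i-1})\| + o(r)$, so that $g(r) \leq -r\|\D\phi(u_{i-1})\| + r^2/\tau_i + o(r) < 0$ for all sufficiently small $r>0$. For large $r$, boundedness of $\phi$ from below by some constant $m_0$ yields $g(r) \geq m_0 + r^2/\tau_i - \phi(u_{i-1}) \to +\infty$ as $r\to\infty$. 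The intermediate value theorem then supplies $r_0>0$ with $g(r_0)=0$, completing the construction.

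The step I expect to be the main obstacle is the small-$r$ estimate: one must ensure that the \emph{linear} energy gain $-r\|\D\phi(u_{i-1})\|$ strictly dominates the \emph{quadratic} dissipation penalty $r^2/\tau_i$ near $r=0$, so that $g$ becomes genuinely negative before returning to $0$. This is exactly where the hypothesis $\D\phi(u_{i-1})\neq 0$ is indispensable, since without it one would only obtain $g(r)=o(r)+r^2/\tau_i$ and could fail to cross zero at a positive radius. The remaining ingredients, namely existence of the spherical minimizer, the Lagrange condition, and continuity of $m$, are standard finite-dimensional facts.
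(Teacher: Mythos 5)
Your proof is correct, and it takes a genuinely different (in a sense, dual) route to the paper's. The paper fixes the \emph{energy equality} as the constraint: it considers the level set $K=\{u\in\Rz^d:\phi(u)+\|u-u_{i-1}\|^2/\tau_i=\phi(u_{i-1})\}$, which is nonempty and compact because $\phi$ is bounded from below, and \emph{maximizes the dissipation} $f(u)=\|u-u_{i-1}\|^2/\tau_i$ over $K$; the parallelism in \eqref{eq:romero2} is then extracted from maximality via the implicit function theorem and a tangent-curve argument, which requires a case distinction according to whether the gradient of $\phi+f$ vanishes at the maximizer. You instead fix the \emph{dissipation} (the radius $r$) as the constraint and \emph{minimize the energy} over the sphere $S_r$, so parallelism comes for free from the Lagrange multiplier rule --- cleaner, since the sphere's constraint gradient $2(u-u_{i-1})$ never vanishes and no case analysis is needed --- and you then enforce the energy equality a posteriori by a scalar intermediate-value argument for $g(r)=m(r)+r^2/\tau_i-\phi(u_{i-1})$. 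Your route makes the role of each hypothesis transparent: $\D\phi(u_{i-1})\neq 0$ produces $g<0$ for small $r$ (via the steepest-descent probe), and boundedness from below produces $g\to+\infty$ for large $r$; in the paper the same hypotheses serve, respectively, to guarantee $K\neq\{u_{i-1}\}$ and to guarantee compactness of $K$. What the paper's approach buys is a single variational problem with no root-finding step; what yours buys is the avoidance of the implicit function theorem altogether and an explicit one-dimensional equation that also adapts immediately to the weakened hypothesis $\phi(u)\geq -C\|u\|^2-C$ with $\tau_i$ small (mentioned in the paper after the theorem), since then $g(r)\geq -C(r+\|u_{i-1}\|)^2-C+r^2/\tau_i-\phi(u_{i-1})\to+\infty$ whenever $1/\tau_i>C$. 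Both arguments rely on finite-dimensional compactness (of $S_r$, respectively of $K$), so neither extends as stated to infinite dimensions.
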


  \begin{proof}
 Define $K=\{u \in \Rz^d \ : \ \phi(u)   + \tau_i \|(u-u_{i-1})/\tau_i\|^2
  = \phi(u_{i-1})\} $. Note that $K$ is not empty as $u_{i-1}\in
  K$. Denote by $f$ the continuous function $f(u) = \| u - u_{i-1}\|^2/\tau_i$. 
	The set $K$ is compact, for it is the preimage of
 $\{\phi(u_{i-1})\}$ with respect to the  continuous and coercive
 function $g= \phi + f$.
One can hence define $u_i$ as the maximizer of   $f $
 on $K$. The element $u_i$ is
  surely different from $u_{i-1}$ as $K$ does not reduce to $u_{i-1}$
	because of $\D \phi(u_{i-1})\not = 0$ and the implicit function theorem.

We now check that $u_i$ solves \eqref{eq:romero2}. The
energy equality follows directly from the fact that $u_i \in K$ and
one is left to verify that $\D \phi(u_i)$ and $u_i -
u_{i-1}$ are parallel. As $\D \phi(u_i) = \D g(u_i)-\D f(u_i) $, such
parallelism follows once we prove that $\D g(u_i)$ and $\D f(u_i)
$ are parallel, as we have $\D f(u_i)= 2(u_i -
u_{i-1})/\tau_i$.

In case $\D g(u_i)=0$, the
parallelism trivially holds. If $\D g(u_i)\not =0$, again the implicit function theorem
ensures that $K$ is a $C^1$ hypersurface in a neighborhood of $u_i$ and
that $\D g(u_i)$ is orthogonal to $K$ at $u_i$. Assume by contradiction that $\D
f(u_i)$ is not
parallel to $\D g(u_i)$.  Then the projection $\nu$ of $\D f(u_i)$ onto
the tangent space $\D g(u_i)^\perp$ is nonzero. By letting
$\gamma:[-\delta,\delta] \to K$ be a  
$C^1$ curve for some small $\delta>0$ with $\gamma(0)=u_i$ and
 $\gamma'(0)=\nu$, we deduce that $f\circ \gamma$ is not maximized
 at $0$, contradicting the maximality of $u_i$. 
  \end{proof}

By inspecting the proof of Theorem \ref{thm:r1}, one realizes that the lower bound on
$\phi$ can be replaced by the weaker quadratic bound $\phi(u) \geq
-C\|u\|^2-C$ at the price of prescribing the time step $\tau_i$ to be
small enough. Indeed, what is needed in the proof is just  the coercivity of the
function $ u\mapsto \phi(u) + \tau_i \|(u-u_{i-1})/\tau_i\|^2$,
which holds for sufficiently small $\tau_i>0$ in view of
$$
  \phi(u) + \tau_i\bigg\|\frac{u-u_{i-1}}{\tau_i}\bigg\|^2
	\ge \bigg(-C+\frac{1}{2\tau_i}\bigg)\|u\|^2 
	- \bigg(C + \frac{1}{\tau_i}\|u_{i-1}\|^2\bigg).
$$

Let a sequence of partitions $\{0=t_0^n<t^n_1<\dots<t^n_{N^n}=T \}$ be given
with $\tau^n=\max_i\tau_i^n =\max_i (t_i^n -t_{i-1}^n)\to 0$ 
and correspondingly, let $u_i^n$ be
solutions of the Gonzalez scheme, whose existence is guaranteed by 
Theorem \ref{thm:r1}. We have the following convergence result.
  
\begin{theorem}[Convergence for the Gonzalez scheme]\label{thm:r2} 
  Let $\phi\in C^1(\Rz^d)$ be
  bounded from below.
	\begin{enumerate}
	\item[\rm (i)] There exists a subsequence which is not relabeled
	such that $\haz u^n \to u$  weakly in $H^1(0,T;\Rz^d)$ as $n\to\infty$, 
	where $u$ solves
  \eqref{eq:gf}. 
	\item[\rm (ii)] Let $\phi\in C^{1,1}_{\rm loc}(\Rz^d)$. Then the whole sequence
  $(\haz u^n)$ converges strongly in $W^{1,\infty}(0,T;\Rz^d)$ and
	we have the error bound
  \begin{equation}
    \label{eq:error_romero}
    \|u-\haz u^n\|_{W^{1,\infty}(0,T;\Rz^d)}\leq C \tau^n. 
  \end{equation}
\item[\rm (iii)] Let $\phi\in C^3(\Rz^d)$ and assume that the condition 
\begin{equation}
  \label{eq:ansgar}
  \D^2\phi(v)w \ \ \text{is parallel to} \ \ w \ \ \text{for any} \ v,\, w \in \Rz^d
\end{equation}
holds. Then 
 \begin{equation}
    \label{eq:error_romero2}
    \|u - \haz u^n\|_{W^{1,\infty} (0,T;\Rz^d)}\leq C (\tau^n)^2.
  \end{equation}
	\end{enumerate}
\end{theorem}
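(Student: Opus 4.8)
The plan is to treat the three parts in order of increasing regularity, using throughout the equivalent formulation \eqref{eq:romero2}, i.e. the discrete energy equality together with the parallelism of $\D\phi(u_i)$ and $u_i-u_{i-1}$.

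For (i) I would first extract the a priori bound. Summing the discrete energy equality $\phi(u_i^n)+\tau_i^n\|(u_i^n-u_{i-1}^n)/\tau_i^n\|^2=\phi(u_{i-1}^n)$ over $i$ and using that $\phi$ is bounded below gives $\int_0^T\|\haz u^{n\prime}\|^2\,\d t=\sum_i\tau_i^n\|(u_i^n-u_{i-1}^n)/\tau_i^n\|^2\le\phi(u_0)-\inf\phi=:C$. Hence $\haz u^n$ is bounded in $H^1(0,T;\Rz^d)$; by Cauchy--Schwarz the iterates lie in a fixed ball $B$, and $\|u_i^n-u_{i-1}^n\|\le(C\tau_i^n)^{1/2}$ so that $\|\ove u^n-\haz u^n\|_{L^\infty}\to0$. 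Passing to a subsequence, $\haz u^n\rightharpoonup u$ weakly in $H^1$ and $\haz u^n,\ove u^n\to u$ uniformly. Rewriting the scheme as $\haz u^{n\prime}+\D\phi(\ove u^n)+r^n=0$, where $r^n$ is the backward-constant interpolant of the discrete-gradient correction, the key point is that $r^n\to0$ in $L^2$: indeed the correction at step $i$ is bounded by $\omega_i^n:=\sup_{s\in[0,1]}\|\D\phi(u_{i-1}^n+s(u_i^n-u_{i-1}^n))-\D\phi(u_i^n)\|$, and uniform continuity of $\D\phi$ on $B$ together with $\max_i\|u_i^n-u_{i-1}^n\|\le(C\tau^n)^{1/2}\to0$ forces $\max_i\omega_i^n\to0$, whence $\|r^n\|_{L^2}^2\le T(\max_i\omega_i^n)^2\to0$. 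Since $\D\phi(\ove u^n)\to\D\phi(u)$ uniformly and $\haz u^{n\prime}\rightharpoonup u'$, testing against an arbitrary $L^2$ function yields $u'+\D\phi(u)=0$ a.e. with $u(0)=u_0$, which is \eqref{eq:gf}.

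For (ii), local Lipschitzness of $\D\phi$ on $B$ gives uniqueness for \eqref{eq:gf}, so the whole sequence converges. The improvement to rate $\tau^n$ rests on upgrading the velocity bound: from the scheme and the $C^{1,1}$ estimate $\|\nabla_d\phi(u_i,u_{i-1})\|\le\tfrac{L}{2}\|u_i-u_{i-1}\|$ one gets $\|(u_i-u_{i-1})/\tau_i\|(1-\tfrac{L}{2}\tau_i)\le\|\D\phi(u_i)\|\le M$, so that $\|u_i-u_{i-1}\|\le 2M\tau_i$ for $\tau^n$ small and hence the correction is $O(\tau^n)$. Writing the scheme as a perturbed implicit Euler step $\tfrac{u_i-u_{i-1}}{\tau_i}+\D\phi(u_i)=\rho_i$ with $\|\rho_i\|\le C\tau^n$, and the exact solution as $\tfrac{u(t_i)-u(t_{i-1})}{\tau_i}+\D\phi(u(t_i))=\sigma_i$ with $\|\sigma_i\|\le C\tau^n$ (Lipschitz truncation error), I would test the difference with the node error $e_i:=u_i-u(t_i)$, use $(e_i-e_{i-1},e_i)\ge\tfrac12(\|e_i\|^2-\|e_{i-1}\|^2)$ and $|(\D\phi(u_i)-\D\phi(u(t_i)),e_i)|\le L\|e_i\|^2$, and close a discrete Gronwall inequality to obtain $\max_i\|e_i\|\le C\tau^n$ (with $e_0=0$). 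Converting the node errors together with $\haz u^{n\prime}=-\D\phi(\ove u^n)-r^n$ into $L^\infty$ bounds then gives \eqref{eq:error_romero}.

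For (iii), the engine is the algebraic fact that \eqref{eq:ansgar} is equivalent to $\D^2\phi(v)=\lambda(v)\,\mathrm{Id}$ for a scalar $\lambda$, since a matrix having every vector as an eigenvector is a multiple of the identity. Consequently $u''=-\D^2\phi(u)u'=\lambda(u)\D\phi(u)$ is parallel to $u'$, so the continuous trajectory is a straight line through $u_0$ in the fixed direction $d\parallel\D\phi(u_0)$; the same parallelism forces the discrete iterates onto this line, the field lines of $-\D\phi$ being straight and non-crossing away from equilibria. Setting $g(x)=\phi(u_0+xd)$ reduces the flow to $x'=-g'(x)$ and the scheme to $g(x_i)+(x_i-x_{i-1})^2/\tau_i=g(x_{i-1})$; a Taylor expansion of this scalar relation gives the increment $\delta=-g'\tau_i+\tfrac12 g'g''\tau_i^2+O(\tau_i^3)$, which matches $x(t_i)-x(t_{i-1})=-g'\tau_i+\tfrac12 g'g''\tau_i^2+O(\tau_i^3)$ (using $x''=g'g''$) to third order, so the local truncation error is $O(\tau^3)$ and the Gronwall framework of (ii) upgrades the global error to $O((\tau^n)^2)$, which is \eqref{eq:error_romero2}. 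The main obstacle is precisely this last part: making the reduction to the line rigorous — showing that the iterates produced by Theorem \ref{thm:r1} do lie on the trajectory's line and handling the possible vanishing of $\D\phi$ — and then carrying out the Taylor bookkeeping exhibiting the exact cancellation of the first-order error, the equivalence of \eqref{eq:ansgar} with $\D^2\phi=\lambda\,\mathrm{Id}$ being what guarantees that this cancellation is structural rather than accidental.
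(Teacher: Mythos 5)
Your part (i) is essentially the paper's own proof: the same summed energy equality giving the $H^1$ bound, the same rewriting of the scheme as $(\haz u^n)' + \D\phi(\ove u^n) + \ove r^n = 0$, and the same modulus-of-continuity estimate showing the discrete-gradient correction vanishes in the limit. Part (ii) is correct but takes a genuinely different route. The paper never compares with the exact solution: it proves that $(\haz u^n)$ is a Cauchy sequence in $W^{1,\infty}(0,T;\Rz^d)$ by subtracting the equations for two partitions $n$ and $m$, testing with $\haz u^n - \haz u^m$, estimating $\|\haz u^n - \ove u^n\|_{L^2(0,T;\Rz^d)} \leq C\tau^n$, and applying the continuous Gronwall lemma; the bound \eqref{eq:error_romero} then follows by letting $m\to\infty$. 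You instead treat the scheme as a perturbed implicit Euler step with defect $O(\tau^n)$, write the truncation error of the exact solution, and close a discrete Gronwall inequality on the node errors $e_i$. Your version needs uniqueness and the Lipschitz regularity of $u'=-\D\phi(u)$ up front (both available here, since $\D\phi$ is locally Lipschitz and part (i) gives existence), while the paper's Cauchy argument produces the limit as a by-product; the rates are the same either way, and your velocity bound $\|u_i-u_{i-1}\|\le 2M\tau_i$ is exactly the paper's Step 2 estimate.

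In part (iii) there is a genuine gap, which you partly flagged yourself: the reduction to a scalar problem requires that the discrete iterates lie on the straight line carrying the continuous trajectory, and ``the field lines are straight and non-crossing'' does not prove this. The scheme's constraint is that $\D\phi(u_i)$ is parallel to $u_i-u_{i-1}$, i.e.\ it involves the field at the \emph{new} point; a priori this places $u_i$ on the line through $u_{i-1}$ with direction $\D\phi(u_i)$, not with direction $\D\phi(u_{i-1})$, and identifying the two directions is precisely what has to be shown. The gap is fillable: by \eqref{eq:ansgar}, $\D\phi(u_i)-\D\phi(u_{i-1}) = \int_0^1 \D^2\phi\big(u_{i-1}+s(u_i-u_{i-1})\big)(u_i-u_{i-1})\,\d s$ is parallel to $u_i-u_{i-1}$, which combined with the scheme's parallelism yields $\D\phi(u_{i-1}) \parallel u_i-u_{i-1}$, so the update does move along the trajectory's line (alternatively, for $d\ge 2$ your identity $\D^2\phi(v)=\lambda(v)\,\mathrm{Id}$ plus symmetry of third derivatives forces $\lambda$ to be constant, so $\phi$ is an isotropic quadratic and everything is explicit). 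The paper's proof, by contrast, needs no geometric invariance at all: it works directly in $\Rz^d$, Taylor-expands the Gonzalez correction into $\big(\tfrac12\, w\cdot\D^2\phi(u_i)w + {\rm O}(\|w\|^3)\big)w/\|w\|^2$ with $w=u_i-u_{i-1}$, and invokes \eqref{eq:ansgar} at the single point $u_i$ only, to replace this projection by the full vector $\tfrac12\D^2\phi(u_i)w$; this term then matches exactly the second-order term in the Taylor expansion of the local exact solution $w(t_i)$ with $w(t_{i-1})=u_{i-1}$, giving the local error ${\rm O}(\tau_i^3)$ and, via the stability estimate \eqref{eq:stability}, the global rate $(\tau^n)^2$. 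That direct vector-valued comparison is shorter and entirely sidesteps the line-invariance argument your write-up still owes.
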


\begin{proof} 
{\em Step 1: Proof of {\rm (i)}}.
Let us start by verifying the stability of the Gonzalez scheme. Indeed,
by adding up the local energy equality, we obtain
\begin{equation} \phi(u^n_m) + \sum_{i=1}^{m} \tau_i \left\|\frac{u_i^n -
    u^n_{i-1}}{\tau_i} \right\|^2 =\phi(u_0)\label{eq:rombound}
\end{equation}
for all $m\leq N^n$. This implies that $(\haz u^n)$ is bounded in
$H^1(0,T;\Rz^d)\hookrightarrow L^\infty(0,T;\Rz^d)$ independently of $n$. 
In particular, $\|\haz u^n(t)\|$  and $\| \ove u^n(t)\|$ are 
bounded for all times.

Let us rewrite the
Gonzalez scheme in the compact form 
\begin{equation}
  \label{eq:romerocompact}
  (\haz u^n)' + \D \phi(\ove u^n) + \ove r^n =0, 
\end{equation}
where the remainder $\ove r^n$ is defined by
\begin{align*}
  &r^n_i:=0 \ \ \text{if}  \ \D\phi(u_{i-1}^n)=0 \ \ \text{and}\\
&r^n_i:= \Big(\phi(u_i^n){-} \phi(u_{i-1}^n) {-} \big(\D \phi(u_i^n), u_i^n
    {-} u_{i-1}^n\big)\Big)\disp\frac{u_i^n {-} u_{i-1}^n}{\|u_i^n {-}
      u_{i-1}^n\|^2}  \ \ \text{if} \ \D\phi(u_{i-1}^n)\not =0.
\end{align*}
One readily checks that 
\begin{align*}
  \|r^n_i\| \leq \left| \int_0^1 \left(\D \phi(\xi u_i^n {+} (1{-}\xi) u_{i-1}^n)
    - \D \phi(u_i^n), \frac{u_i^n {-} u_{i-1}^n}{\|u_i^n {-}
      u_{i-1}^n\|}\right) \d \xi \right|.
\end{align*}
Denote by $\omega$ a continuity modulus for $\D \phi$
on a ball containing $\ove u^n$ for all times. In particular,
$\omega:[0,\infty)\to [0,\infty)$ is nondecreasing and $\lim_{t\to 0+}
\omega(t) = \omega (0)=0$. Then
$$
  \|r^n_i\|\leq \omega\big((1-\xi) \|u_i - u_{i-1}\|\big)
	\leq \omega \big( \|u_i - u_{i-1}\|\big).
$$
This implies that
\begin{align*}
  \max_{i=1,\dots, N^n} \|r^n_i\| 
	&\leq \max_i \,\omega \big( \|u_i - u_{i-1}\|\big) 
	\leq \max_i\omega \left(\int_{t_{i-1}^n}^{t^n_i}\|(\haz u^n)'\|\d r\right) \nonumber\\
  &\leq \max_i\,\omega
  \left((t_{i}^n-t^n_{i-1})^{1/2}\left(\int_{t_{i-1}^n}^{t^n_i}\|(\haz
    u^n)'\|^2\d r\right)^{1/2}\right) \nonumber\\
  &\leq \max_i\,\omega
  \left((t_{i}^n-t^n_{i-1})^{1/2}\left(\int_0^T\|(\haz
    u^n)'\|^2\d r\right)^{1/2}\right) \nonumber\\
  &\leq \omega(C(\tau^n)^{1/2}).
\end{align*}
In particular, $\ove r^n \to 0$ strongly in $L^\infty(0,T;\Rz^d)$. 

Bound \eqref{eq:rombound} ensures, upon passing to subsequences, 
which are not relabeled, that
\begin{align*}
  \haz u^n \rightharpoonup u \quad& \text{weakly in} \ \ H^1(0,T;\Rz^d),\\
\haz u^n, \, \ove u^n \to u \quad& \text{strongly in} \ \
L^\infty(0,T;\Rz^d),\\
\D\phi(\ove u^n) \to \D\phi(u) \quad&\text{strongly in} \ \
L^q(0,T;\Rz^d) \quad \mbox{for all }q <\infty.
\end{align*}
For the last convergence, we have used that $\D\phi(\ove u^n) \to
\D\phi(u)$ pointwise, as $\D \phi$ is continuous, and that
$\|\D\phi(\ove u^n) \|$ is uniformly bounded.
One can hence pass to the limit in \eqref{eq:romerocompact} and find
that $u$ solves the gradient-flow problem \eqref{eq:gf}.

{\em Step 2: $\phi\in C^{1,1}_{\rm loc}(\Rz^d)$}.
In case $\D \phi$ is locally Lipschitz continuous, the solution of \eqref{eq:gf} is unique and we can prove an error estimate. Let
us start by noting that in this case one has
\begin{equation}  
    \|r^n_i\| \leq C \|u_i^n - u^n_{i-1}\| =  C \tau_i \left\| \frac{u_i^n -
    u^n_{i-1}}{\tau_i}\right\|.\label{eq:rr}
\end{equation}
As $\ove u^n$ is bounded, \eqref{eq:romerocompact} and \eqref{eq:rr} imply that  
$$ 
  \|(\haz u^n)'\| \leq \|\D \phi(\ove u^n)\| + \|\ove r^n\| \leq C + C
  \tau^n  \|(\haz u^n)'\|. 
$$
Consequently, for sufficiently small values of $\tau^n>0$, 
we infer that $(\haz u^n)'$ is
bounded in $L^\infty(0,T;\Rz^d)$. Relation \eqref{eq:rr} then ensures that  
\begin{equation}
\| \ove r^n \|_{L^\infty(0,T;\Rz^d)} \leq C\tau^n\| (\haz u^n)'
\|_{L^\infty(0,T;\Rz^d)} \leq C\tau^n. \label{eq:r}
\end{equation}

Take now the difference between
\eqref{eq:romerocompact}, written for the partition $n$, and the same
equation for the partition $m$, test it against $\haz u^n - \haz u^m$, and
integrate in time. Then, in view of the Lipschitz continuity of $\D\phi$,
\begin{align}\label{eq:un-um}
  \frac12& \|(\haz u^n - \haz u^m)(t)\|^2 \\ \nonumber
  &=- \int_0^{t} (\D \phi(\ove u^n) - \D
  \phi(\ove u^m), \haz u^n - \haz u^m)\d r - \int_0^{t} (\ove r^n - \ove
    r^m, \haz u^n - \haz u^m)\d r \\ \nonumber
  &\leq C\left( \int_0^t \|\ove u^n - \ove u^m\| \, \|\haz u^n - \haz
  u^m\|\d r + \int_0^t (\|\ove r^n\|+\| \ove r^m\|) \|\haz u^n - \haz
  u^m\|\d r \right).
\end{align}

The difference of the piecewise linear and piecewise constant functions can be
estimated according to
\begin{align*}
  &\int_0^T \|\haz u^n - \ove u^n\|^2\d r 
	= \sum_{i=1}^{N^n}\int_{t^n_{i-1}}^{t^n_i} \|\haz u^n - \ove u^n\|^2\d r 
	= \sum_{i=1}^{N^n} \|u^n_i - u^n_{i-1}\|^2
  \left(\int_{t^n_{i-1}}^{t^n_i}(\ell^n_i)^2\d r\right) \nonumber\\
  &=\sum_{i=1}^{N^n} \frac{\tau^n_i}{3}
  \|u^n_i - u^n_{i-1}\|^2 \leq \frac{(\tau^n)^2}{3}
  \sum_{i=1}^{ N^n}\tau^n_i \left\|\frac{u^n_i - u^n_{i-1}}{\tau^n_i}
  \right\|^2 \leq C (\tau^n)^2 . 
\end{align*}
Using \eqref{eq:r} and the previous estimate, relation~\eqref{eq:un-um} gives
\begin{align*}
  \frac12& \|(\haz u^n - \haz u^m)(t)\|^2
  \le C\int_0^t\big(\|\ove u^n-\haz u^n\|^2 + \|\haz u^n-\haz u^m\|^2
	+ \|\haz u^m-\ove u^m\|^2\big)\d r \\
	&\phantom{xx}{}
	+ C\int_0^t \big(\|\ove r^n\|+\| \ove r^m\|\big) \|\haz u^n - \haz u^m\|\d r \\
	&\le C\int_0^t\|\haz u^n-\haz u^m\|^2\d r + C\big((\tau_i^n)^2 + (\tau_i^m)^2\big).
\end{align*}

We deduce from the Gronwall lemma that 
\begin{align}
\|\haz u^n -\haz u^m\|_{L^\infty(0,T;\Rz^d)} 
&\leq C(\tau^n + \tau^m) .\label{eq:progro2}
\end{align}

Then, again by arguing on the difference of
relations \eqref{eq:romerocompact} and using the local Lipschitz
continuity of $\D \phi$, we find that 
\begin{align*}
 \|(\haz u^n)' -(\haz u^m)'\|_{L^\infty(0,T;\Rz^d)}&\leq C \|\ove u^n - \ove
 u^m\|_{L^\infty(0,T;\Rz^d)} \\
&\phantom{xx}{}+ \|\ove
r^n\|_{L^\infty(0,T;\Rz^d)} + \|\ove r^m\|_{L^\infty(0,T;\Rz^d)}.
\end{align*}
Bounds \eqref{eq:r} and \eqref{eq:progro2} imply that
$$ \| \haz u^n - \haz u^m\|_{W^{1,\infty}(0,T;\Rz^d)} \leq C(\tau^n +
\tau^m).$$
By taking $m\to \infty$, we obtain the error estimate \eqref{eq:error_romero}.  We refer to \cite[Lemma~17.2.2, p.~695]{Attouch14} for the
analogous result for the Euler method. 

{\em Step 3: $\phi\in C^3(\Rz^d)$}.
We now turn to the proof of second-order consistency under condition
\eqref{eq:ansgar}. We conclude from condition \eqref{eq:ansgar} and the estimate
$\|u_i-u_{i-1}\|\le C\tau_i^n$ that the scheme \eqref{eq:romero02} can be written as
\begin{align*}
  \frac{u_i-u_{i-1}}{\tau_i^n} &+ \D\phi(u_i)
  = -\big(\phi(u_i) - \phi(u_{i-1}) - \D\phi(u_i)\cdot (u_i {-}
    u_{i-1})\big)\frac{u_i {-} u_{i-1}}{\|u_i {-} u_{i-1}\|^2}\\
  &= \left(\frac12   (u_i {-}
    u_{i-1})\cdot \D^2\phi(u_i)  (u_i {-}
    u_{i-1}) + {\rm O}(\|u_i{-}u_{i-1}\|^3)\right)\frac{u_i 
		- u_{i-1}}{\|u_i {-} u_{i-1}\|^2}\\
  &= \frac12 \D^2\phi(u_i)  (u_i {-}
    u_{i-1}) + {\rm O}((\tau^n_i)^2).
\end{align*}
In particular, $u_i$ solves
\begin{equation}
  \label{eq:ansgar2}
  u_i = u_{i-1} -\tau^n_i \D\phi(u_i) + \frac{ \tau^n_i}{2}\D^2\phi(u_i)  (u_i {-}
    u_{i-1}) + {\rm O}((\tau^n_i)^3).
\end{equation}
Let $w$ be the unique solution of $w'+ \D\phi(w)=0$ with
$w(t_{i-1})=u_{i-1}$. Then $w''= - \D^2\phi(w) w' =
\D^2\phi(w)\D\phi(w)\in C^1(\Rz^d)$. A Taylor expansion for some 
$\eta\in [t_{i-1},t_i]$ leads to
\begin{align}
  w(t_i) &= w(t_{i-1}) + \tau_i^n w'(t_{i}) -
  \frac{(\tau^n_i)^2}{2}w''(t_{i}) + \frac{(\tau^n_i)^3}{6}w'''(\eta) \nonumber \\
  &= u_{i-1} - \tau^n_i \D\phi( w(t_i))  -
  \frac{(\tau^n_i)^2}{2}\D^2\phi( w(t_i))\D\phi( w(t_i)) +
  \frac{(\tau^n_i)^3}{6}w'''(\eta)\nonumber\\ 
&= u_{i-1} - \tau^n_i \D\phi( w(t_i))  +
  \frac{\tau^n_i}{2}\D^2\phi( w(t_i))(w(t_i)- u_{i-1}) + {\rm
    O}((\tau^n_i)^3),
\label{eq:ansgar3}
\end{align}
where in the last step we used the fact that 
$w(t_i)- u_{i-1} = w(t_i)- w(t_{i-1}) = - \tau^n_i
\D \phi(w(t_i)) + {\rm O}((\tau^n_i)^2)$. 
Take now the difference of relations \eqref{eq:ansgar3} and
\eqref{eq:ansgar2}, multiply it by $ w(t_i) - u_i$ and use the local
Lipschitz continuity of the functions $\D\phi$ and $\xi \mapsto \D^2\phi(
\xi)(\xi{-} u_{i-1}) $ in order to obtain
$$\|w(t_i) - u_i\|^2 \leq \tau^n_i C \|w(t_i) - u_i\|^2 + C(\tau^n_i)^3 \|w(t_i) -
u_i\|.$$
Hence, for sufficiently small $\tau^n>0$, we infer that 
\begin{equation}
  \|w(t_i) - u_i\|\leq C (\tau^n_i)^3.\label{eq:ansgar4}  
\end{equation}

The error control \eqref{eq:error_romero2} follows from the stability
of Problem \eqref{eq:gf}. Indeed, let $u_1$, $u_2$ be two solutions of the
system $u'+\D\phi(u)=0$, it follows for all $0\leq s\leq t$ that
\begin{equation}
  \label{eq:stability}
  \|(u_1-u_2)(t)\| \leq \e^{L (t-s)} \|(u_1-u_2)(s)\|,
\end{equation}
where $L= \max_{t,i} \| \D^2\phi(u_i(t))\|$. We use  
\eqref{eq:ansgar4} and \eqref{eq:stability} to find that 
\begin{align}
 & \|u(t_i) - u_i\|  \nonumber\\
&\leq \|u(t_i) - w(t_i)\|+
  \|w(t_i) - u_{i}\|  \leq \e^{L\tau_i}\|u(t_{i-1})
  - u_{i-1}\| +
  \|w(t_i) - u_{i}\| \nonumber\\
&\leq \e^{L\tau_i}\|u(t_{i-1})
  - u_{i-1}\|+ C(\tau^n_i)^3 \leq \dots \leq
  Ci\max\{1,\e^{L T}\} (\tau^n_i)^3 \leq C(\tau^n)^2 \label{eq:stability2}
\end{align}
whence the bound $\|u - \haz u^n\|_{L^\infty}(0,T;\Rz^d)\leq C
(\tau^n)^2$ holds. Estimate \eqref{eq:error_romero2} follows from the
local Lipschitz continuity of $\D \phi$.
\end{proof}

 Some of the arguments of the proof of Theorem \ref{thm:r2} can be
adapted to the case of a gradient flow on an unbounded time
domain $t \in [0,\infty)$. In particular, by possibly asking $\phi$ to
be coercive, one can reproduce the convergence proof of point (i). On the
other hand, the error bounds at points (ii)-(iii) call for some
Gronwall argument, see \eqref{eq:progro2} and \eqref{eq:stability2},
which necessarily calls for the finiteness of the time interval   $[0,T]$. 

Note that condition \eqref{eq:ansgar} ensuring the second-order
convergence \eqref{eq:error_romero2} is always fulfilled in one
dimension ($d=1$) and is sharp, as the example  in Figure
\ref{fig:compare1}  $u'+u=0$, $u(0)=1$
shows.


 In several dimensions, condition \eqref{eq:ansgar} holds for radial
 functions $\phi$. On the other hand,  by testing on a
nonradial potential, one can check that the first-order convergence
rate in \eqref{eq:error_romero} is
sharp, as the choice 
\begin{equation}\label{eq:choice}
\phi(u)=u_1^2 + \tfrac14 u_2^2 \quad\text{for} \ u=(u_1,u_2)\in \Rz^2, \
u_0=(1,1),
\end{equation} with exact
solution $u(t)=(\exp(-2t),\exp(-t/2))$ shows, see Figure
  \ref{fig:errorgo}. 
Moreover, condition \eqref{eq:ansgar} holds for quadratic potentials.
	
\begin{figure}[ht]
\centering
\pgfdeclareimage[width=75mm]{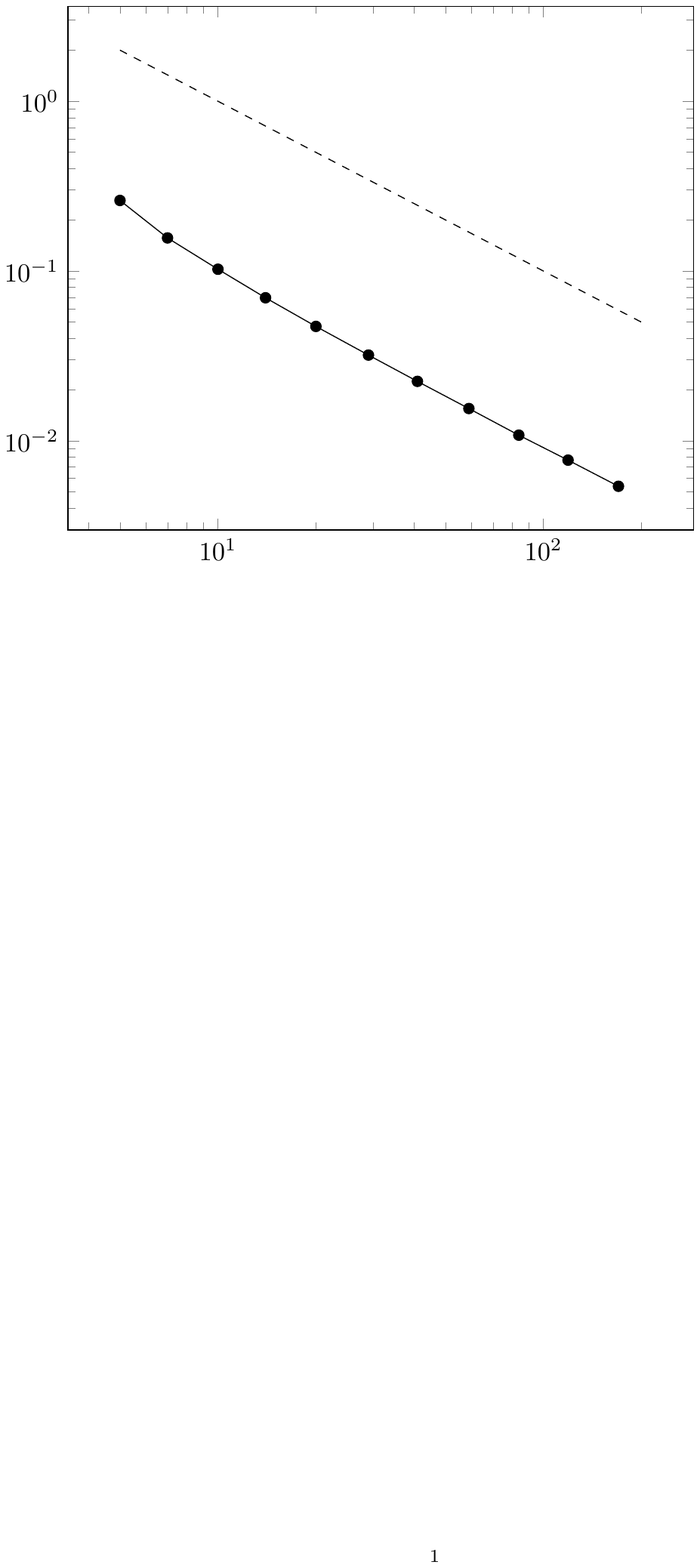}{errorgo} 
\pgfuseimage{errorgo}
\caption{$L^\infty$ error of the difference of the solution of the
Gonzalez scheme and the exact solution for the choice \eqref{eq:choice}
as a function of $1/\tau$ in log-log scale.
The dashed line corresponds to order $\tau$.
The order of convergence in \eqref{eq:error_romero} is sharp.}
\label{fig:errorgo}
\end{figure}

Note that in  Theorem \ref{thm:r2} one could approximate the initial data 
$u_{0}^n \to u_0$ as well. This
would still imply the convergence by including 
an additional term in the error estimate, taking into account the initial
error $\|u_0 - u_0^n\|$.

Before closing this section, let us comment on some shortcomings of
the Gonzalez scheme. First of all, the analysis of the Gonzalez scheme 
is at the moment restricted to
$C^{1}$ energies with compact sublevels, which in turn enforces a
finite-dimensional setting. 

In some nonsmooth cases, the existence of an
update could be conditional to the smallness of the time step. An
example in this direction is given by the scalar energy
$\phi(u)=-u+I_{[\sqrt{2},\infty)}(u)$, where $I_{[\sqrt{2},\infty)}$ stands
for the indicator function of the half-line $[\sqrt{2},\infty)$. For $u_0=2$,
the unique strong solution to \eqref{eq:gf} is
$u(t)=\max\{2-t,\sqrt{2}\}$. For all given {\it rational} time steps
$\tau>0$, one can however identify a number $i\in\Nz$ such that $2 - i\tau<\sqrt{2}<
2- (i-1)\tau$ and check that the Gonzalez scheme cannot be solved at
step $i$.

Eventually, the Gonzalez scheme seems not to be related to a variational
principle. The actual computation of the update from the previous step
involves the solution of the scalar energy equality as well as the
discussion on the alignment condition, which makes the incremental problem
a nonlinear system.


\section{The De Giorgi scheme}\label{sec:dg}

Let now the Hilbert space $H$ be general, possibly infinite-dimensional. 
We shall consider energies of the form $\phi = \phi_1 +
\phi_2$, where   $\phi_1:H \to
(-\infty,\infty] $ is convex, proper, lower semicontinuous and $\phi_2\in
C^{1,\alpha}_{\rm loc}(H)$ for some $\alpha \in (0,1]$. 
For the sake of notational simplicity, we assume 
$\partial \phi_1$ to be single-valued and remark that $u\in D(\partial
\phi) \equiv D(\partial \phi_1)$. The arguments easily extend to
not single-valued operators $\partial \phi_1$ with a bit notational intricacy.

In case $\phi$ is nonconvex, one can easily find
situations for which the De Giorgi scheme \eqref{eq:dg} has no solution. An example
in this direction is the scalar energy $\phi(u)=u(1-u)$ from
$u_{i-1}=0$. Indeed, in this case \eqref{eq:dg} reads as
$$
  u(1-u) +\frac{u^2}{2\tau}+\frac{\tau}{2}(1-2u)^2=0,
$$
which admits no real solution, independently of the choice of $\tau>0$.
We are hence forced to {\it generalize} the De Giorgi scheme by
allowing the possibility of solving  \eqref{eq:dg} with some 
tolerance. Define the functionals $G_i: D(\partial
\phi) \times  D(\partial \phi) \to (-\infty,\infty]$ as  
$$
  G_i(u,v) =  \phi(u) + \frac{\tau_i}{2}\left\| \frac{u  - v}{\tau_i}\right\|^2 +
  \frac{\tau_i}{2}\| \partial \phi(u )\|^2 -\phi(v).
$$ 
Given $u_{i-1}\in D(\partial\phi)$, one can find $u_i \in D(\partial
\phi)$ such that $(G_i(u_i,u_{i-1}))^+$ is arbitrarily small. 
In particular, we look for $u_i \in D(\partial \phi)$ such that
\begin{equation}
  \label{eq:dgxi}
  \phi(u_i) + \frac{\tau_i}{2}\left\| \frac{u_i - u_{i-1}}{\tau_i}\right\|^2 +
  \frac{\tau_i}{2}\|\partial \phi(u_i)\|^2 = \phi(u_{i-1}) + \rho_i
\end{equation}  
where the {\it residuals}
 $\rho_i$ are such that $\rho_i^+$ is small enough, see Theorem \ref{thm:convergence} below.

The following existence result holds.

\begin{theorem}[Existence for the De Giorgi scheme]\label{thm:existence}
Let $\phi = \phi_1 + \phi_2$  have compact sublevels, $\phi_1:H \to
(-\infty,\infty] $ be convex, proper, lower semicontinuous with
$\partial \phi_1$ being single-valued,  and $\phi_2\in C^{1,\alpha}_{\rm loc}(H)$ for 
$\alpha \in (0,1]$. Furthermore, let $u_{i-1}\in D(\partial \phi)$ be given with
$\partial \phi (u_{i-1}) \not = 0$. Then there
exists $u_i\in D(\partial \phi)$   with $u_i \not = u_{i-1}$ and
\begin{equation}\label{eq:dgtp}
  G_i(u_i, u_{i-1})\leq \frac{L}{1+\alpha}
  \|u_i - u_{i-1}\|^{1+\alpha},
\end{equation}
 where $L$ is the H\"older constant of $\D \phi_2$. In particular,
 \eqref{eq:dgxi} can be solved with $\rho_i \leq L \|u_i -
 u_{i-1}\|^{1+\alpha}/(1+\alpha)$. In case $\phi$ is convex, namely $\phi_2=0$, and
 $\|\partial \phi\|$ is strongly continuous along segments in $D(\partial \phi)$,
 one can find $u_i$ such that $G_i(u_i, u_{i-1})=0$.
\end{theorem}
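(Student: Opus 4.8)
The plan is to realize $u_i$ as the De Giorgi variational interpolant of the implicit Euler functional at the full step $\tau_i$. For $s\in(0,\tau_i]$ I set $g(s):=\min_{u\in H}\big(\phi(u)+\tfrac{1}{2s}\|u-u_{i-1}\|^2\big)$ and pick a minimizer $u(s)$; existence follows from the direct method, since $\phi$ is lower semicontinuous with compact sublevels while the penalization is continuous and only strengthens coercivity. At a minimizer, $0$ lies in the Fr\'echet subdifferential of the penalized functional; as the penalization is smooth and $\partial\phi=\partial\phi_1+\D\phi_2$ is single valued, this gives the Euler--Lagrange identity $\partial\phi(u(s))=(u_{i-1}-u(s))/s$, hence $\|\partial\phi(u(s))\|=\|u(s)-u_{i-1}\|/s$ and $u(s)\in D(\partial\phi)$. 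Because $\partial\phi(u_{i-1})\neq 0$, the point $u_{i-1}$ is not stationary for the penalized functional, so $u(s)\neq u_{i-1}$ for every $s>0$; I will take $u_i:=u(\tau_i)$, which therefore satisfies $u_i\neq u_{i-1}$.

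The heart of the argument is the De Giorgi energy identity for the value function. One shows that $s\mapsto g(s)$ is locally Lipschitz with $g'(s)=-\tfrac{1}{2s^2}\|u(s)-u_{i-1}\|^2=-\tfrac12\|\partial\phi(u(s))\|^2$ for a.e.\ $s$ and $g(0^+)=\phi(u_{i-1})$, so that integration yields
\begin{equation*}
\phi(u(\tau_i)) + \frac{1}{2\tau_i}\|u(\tau_i)-u_{i-1}\|^2 + \frac12\int_0^{\tau_i}\|\partial\phi(u(s))\|^2\,\d s = \phi(u_{i-1}) .
\end{equation*}
Substituting this into the definition of $G_i$ cancels the energy and kinetic contributions and leaves
\begin{equation*}
G_i(u(\tau_i),u_{i-1}) = \frac{\tau_i}{2}\|\partial\phi(u(\tau_i))\|^2 - \frac12\int_0^{\tau_i}\|\partial\phi(u(s))\|^2\,\d s = \frac12\int_0^{\tau_i}\big(\|\partial\phi(u(\tau_i))\|^2 - \|\partial\phi(u(s))\|^2\big)\,\d s .
\end{equation*}
Everything is thus reduced to comparing the endpoint slope $\|\partial\phi(u(\tau_i))\|$ with its values along the interpolation.

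If $\phi$ is convex ($\phi_2=0$), this slope is exactly nonincreasing: writing $p(s)=\partial\phi(u(s))$ and $u(s)=u_{i-1}-s\,p(s)$ and testing the monotonicity of $\partial\phi$ at two steps $s_1<s_2$ against $u(s_1)-u(s_2)$ gives
\begin{equation*}
\big(\|p(s_1)\|-\|p(s_2)\|\big)\big(s_1\|p(s_1)\|-s_2\|p(s_2)\|\big)\le 0 ,
\end{equation*}
which forces $\|p(s_1)\|\ge\|p(s_2)\|$ (the classical monotonicity of the Yosida approximation, cf.\ \cite{Brezis73}). The displayed integrand above is then nonpositive, so $G_i(u(\tau_i),u_{i-1})\le 0$, i.e.\ \eqref{eq:dgtp} with $L=0$. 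To upgrade this to the equality $G_i(u_i,u_{i-1})=0$, I restrict $u\mapsto G_i(u,u_{i-1})$ to the segment joining $u_{i-1}$ to $u(\tau_i)$: at $u_{i-1}$ it equals $\tfrac{\tau_i}{2}\|\partial\phi(u_{i-1})\|^2>0$, at $u(\tau_i)$ it is $\le 0$, and it is continuous along the segment since $\phi$ is finite and convex there and $\|\partial\phi\|$ is assumed strongly continuous along segments; the intermediate value theorem then produces a point, necessarily different from $u_{i-1}$, at which $G_i$ vanishes.

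For the perturbation, the same test of the monotonicity of $\partial\phi_1$ now retains the term $(\D\phi_2(u(s_1))-\D\phi_2(u(s_2)),u(s_1)-u(s_2))$, which is controlled by $L\|u(s_1)-u(s_2)\|^{1+\alpha}$ through the $\alpha$-H\"older continuity of $\D\phi_2$; this turns exact monotonicity into a near-monotonicity carrying a H\"older-sized defect, and the $\tfrac{L}{1+\alpha}\|\cdot\|^{1+\alpha}$ shape of the target is precisely the linearization error $\phi_2(u_i)-\phi_2(u_{i-1})-(\D\phi_2(u_{i-1}),u_i-u_{i-1})$ of the nonconvex part. Inserting the near-monotonicity into the integral representation of $G_i$ and using $\|u(s)-u_{i-1}\|\le\|u_i-u_{i-1}\|$ along the interpolation should assemble exactly the bound \eqref{eq:dgtp}, whence $\rho_i\le L\|u_i-u_{i-1}\|^{1+\alpha}/(1+\alpha)$ is read off by setting $\rho_i:=G_i(u_i,u_{i-1})$. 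The main obstacle is this last quantitative step: establishing the energy identity and the near-monotonicity rigorously in the nonsmooth, possibly infinite-dimensional setting --- for which I would invoke the minimizing-movement machinery for $C^{1,\alpha}$ (dominated concave) perturbations of convex functionals, cf.\ \cite{Ambrosio08,Rossi-Savare06} --- and controlling the accumulated H\"older defect so that it condenses into the single clean constant $L/(1+\alpha)$ rather than a larger integral remainder. The remaining claims, namely $u_i\in D(\partial\phi)$ and $u_i\neq u_{i-1}$, are then immediate from the first step.
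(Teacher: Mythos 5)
Your starting point is the same as the paper's: your $u_i:=u(\tau_i)$ is exactly the implicit Euler minimizer, your Euler--Lagrange identity $\partial\phi(u_i)=(u_{i-1}-u_i)/\tau_i$ is the paper's equation \eqref{eq:euler}, and your segment/intermediate-value argument in the convex case is identical to the paper's (your slope-monotonicity route to $G_i(u(\tau_i),u_{i-1})\le 0$ when $\phi_2=0$ is correct, just heavier than needed). The genuine gap is the general case $\phi_2\neq 0$, which is the main assertion \eqref{eq:dgtp}: there you reduce everything to a ``near-monotonicity'' of $s\mapsto\|\partial\phi(u(s))\|$ with a H\"older-sized defect inserted into the integral representation of $G_i$, and you yourself flag that carrying this out, and making the accumulated defect condense into the single constant $L/(1+\alpha)$, is ``the main obstacle.'' That step is never executed, and it is doubtful it works as stated: the identity $g'(s)=-\tfrac12\|\partial\phi(u(s))\|^2$ a.e.\ requires care for nonconvex $\phi$ (uniqueness or measurable selection of minimizers of the Moreau--Yosida problem), and the perturbed monotonicity test produces a remainder controlled by $\|u(s_1)-u(s_2)\|^{1+\alpha}$ between interpolation points, which after integration over $s$ does not obviously collapse to $\tfrac{L}{1+\alpha}\|u_i-u_{i-1}\|^{1+\alpha}$.

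The observation that closes the proof is elementary, and you already hold all of its ingredients. Since $u_i$ solves the Euler equation, $\tfrac{\tau_i}{2}\|\partial\phi(u_i)\|^2=\tfrac{\tau_i}{2}\|(u_i-u_{i-1})/\tau_i\|^2$ exactly, so the functional collapses pointwise, with no integral in $s$ at all:
\begin{equation*}
G_i(u_i,u_{i-1})=\phi(u_i)+\tau_i\left\|\frac{u_i-u_{i-1}}{\tau_i}\right\|^2-\phi(u_{i-1}).
\end{equation*}
Now test the Euler equation against $u_i-u_{i-1}$ and use the convexity of $\phi_1$ to absorb $(\partial\phi_1(u_i),u_i-u_{i-1})\ge\phi_1(u_i)-\phi_1(u_{i-1})$; what remains is precisely the linearization error of $\phi_2$ at $u_i$,
\begin{equation*}
\tau_i\left\|\frac{u_i-u_{i-1}}{\tau_i}\right\|^2+\phi(u_i)-\phi(u_{i-1})
\le \int_0^1\Big(\D\phi_2\big(\xi u_i+(1-\xi)u_{i-1}\big)-\D\phi_2(u_i),\,u_i-u_{i-1}\Big)\,\d\xi
\le \frac{L}{1+\alpha}\|u_i-u_{i-1}\|^{1+\alpha},
\end{equation*}
by the H\"older continuity of $\D\phi_2$. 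This is \eqref{eq:dgtp} directly; the De Giorgi variational interpolant, the a.e.\ differentiability of the value function, and the slope monotonicity are all unnecessary. This one-step argument is the paper's proof, and it is exactly the quantitative step your proposal leaves open.
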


In the convex case $\phi_2=0$, the functional $u\mapsto \|\partial \phi(u)\|$ is
lower semicontinuous along segments in $D(\partial
\phi)$. On the other hand, the continuity of $\|\partial \phi\|$ 
along segments assumed in the theorem may be available even in some nonsmooth 
situations. As an illustration of this
fact, let $H=L^2(\Omega)$ for some Lipschitz domain $\Omega\in \Rz^d$ 
and define $\phi$ to be the Dirichlet energy
$$ 
  \phi(u)= \frac12\int_\Omega |\nabla u|^2\d x
$$
for $u \in H^{1}(\Omega)$ and $\phi(u)=\infty$ otherwise, which is lower semicontinuous but
not continuous in $L^2$. Then
$D(\partial \phi) = H^2(\Omega)$ and $\partial \phi(u)= -\Delta
u$. Hence, for all $u_1$, $u_2 \in D(\partial \phi)$, the mapping
\begin{align*}
 & \lambda\in \Rz \mapsto \|\partial\phi(\lambda u_1 +(1-\lambda)
  u_2)\|= \| -\Delta(\lambda u_1 +(1-\lambda)
  u_2)\| \\
&= \left(\lambda^2 \| -\Delta u_1\|^2  + 2\lambda(1-\lambda)
  \int_\Omega \Delta u_1 \Delta u_2 \, \d x 
	+ (1-\lambda)^2 \|-\Delta u_2\|^2\right)^{1/2}  
\end{align*}
is continuous for $\lambda\in[0,1]$. 
Note that the continuity of
$\|\partial \phi\|$ along segments in $D(\partial \phi)$ is weaker than the 
strong continuity of $\partial \phi$.

\begin{proof}[Proof of Theorem \emph{\ref{thm:existence}}]
Let us start by considering the classical Euler scheme 
\begin{equation}
  \frac{u^e-u_{i-1}}{\tau_i} + \partial \phi(u^e)=0, \label{eq:euler}
\end{equation}
which can be solved by minimizing the function 
$$
  u \mapsto \frac{1}{2\tau_i}\| u - u_{i-1}\|^2 + \phi(u).
$$
Note that the latter function has compact sublevels and the direct method applies.
We readily check that $u_i:=u^e$ delivers  $u_i
\not = u_{i-1}$, for $u_{i-1} $ is not critical. In addition, one can
prove \eqref{eq:dgtp} as well.
To this aim, we test relation \eqref{eq:euler} with $u^e - u_{i-1}$ 
and use the convexity of $\phi_1$ and
the H\"older continuity of $\D\phi_2$ to find that
\begin{align}
  \tau_i &\left\| \frac{u^e-u_{i-1}}{\tau_i}\right\|^2 +\phi(u^e) -
  \phi(u_{i-1}) \nonumber\\
&\leq  \phi_2(u^e) - \phi_2(u_{i-1}) - (\D
  \phi_2(u^e),u^e - u_{i-1}) \nonumber\\
&= \int_0^1 \Big(\D\phi_2(\xi u^e {+} (1{-}\xi)u_{i-1}) - \D \phi_2
(u^e),u^e-u_{i-1}
\Big)\d \xi \nonumber\\
&\leq L\|u^e-u_{i-1}\|^{1+\alpha} \int_0^1(1-\xi)^\alpha\, \d \xi = \frac{L}{1+\alpha} 
\|u^e- u_{i-1}\|^{1+\alpha}.\label{eq:aa}
\end{align}
By using  again \eqref{eq:euler}, one finds that $ u^e$ fulfills 
\eqref{eq:dgtp} when choosing $u_i = u^e$.

In the convex case $\phi_2=0$, the choice $u_i=u^e$ does not
necessarily implies that
$G_i(u_i,u_{i-1})=0$.
In case $G_i(u^e,u_{i-1})< 0$
(check
$\phi(u)= u^2$ from $u_0=1$ for sufficiently small time steps), we can look for a point
$u_i$ along the segment  $u_\lambda
= \lambda u^e + (1-\lambda) u_{i-1}$ for $\lambda\in [0,1)$ such that $G_i(u_\lambda,u_{i-1})\equiv 0$.
Note that
the real function 
$$\lambda\mapsto g(\lambda):=G_i(u_\lambda,u_{i-1})$$
is well defined, for $D(\phi)$ and $D(\partial \phi)$ are
convex. Moreover, $\lambda\in [0,1] \mapsto \phi(u_\lambda)$ is convex and
lower semicontinuous, hence
continuous on $[0,1]$. The assumption on the continuity of $\|\partial
\phi\|$ along lines implies that $\lambda \mapsto
\|\partial\phi(u_\lambda)\|$ is continuous as well. Hence, $g$ is
continuous in $(0,1)$ and we find $\lambda \in (0,1)$ such that
$G_i(u_\lambda,u_{i-1})=g(\lambda)=0$ as $g(0)=\tau_i \| \partial \phi(u_{i-1})\|^2 /2>0$,
for $u_{i-1}$ is not singular, and $g(1)<0$. 
\end{proof}

As will be clear from the statement of Theorem \ref{thm:convergence}
below, the smallness of the residuals $\rho_i$ in \eqref{eq:dgxi}
will be instrumental to pass to the limit in the scheme. Theorem
\ref{thm:existence} claims that these can be as small as $L\|u_i
-u_{i-1}\|^{1+\alpha}$, which allows us to prove convergence. This
suggests to consider discrete solutions $u_i$ that {\it minimize}
the residuals. This corresponds to formulate a variational principle
of the form  
\begin{equation}
u_i \in \argmin_{u \in D(\partial \phi)} G_i(u,u_{i-1}).
\label{eq:min}
\end{equation}  
Recall the classical strong/weak closure property
$$ x_n \to x \ \text{strongly in $H$}, \ y_n \rightharpoonup y \ \text{weakly in
  $H$}, \ y_n \in \partial \phi_1(x_n) \ \Rightarrow \ y \in \partial \phi_1(x).$$

This minimum problem is solvable by the direct method, for  
 the sublevels of the function $u
\mapsto G_i(u,u_{i-1})$ are strongly compact. 
Indeed, one has $G_i(\cdot, u_{i-1})\geq \phi(\cdot) -\phi(u_{i-1})$
and the sublevels of $\phi$ are compact.

In particular, let $(u^n)$ be a strongly convergent minimizing sequence
such that $u^n\to u_i$ as $n\to\infty$. Then we deduce from 
the strong-weak closure of the subdifferential $\partial \phi_1$ that
$$ 
  \partial \phi(u^n) = \partial \phi_1(u^n) + \D \phi_2(u^n)
  \rightharpoonup  \partial \phi_1(u_i) + \D \phi_2(u_i) =\partial \phi(u_i) \quad
  \text{weakly in }H,
$$
and the lower semicontinuity of $u
\mapsto G_i(u,u_{i-1})$ follows. Clearly, $u_i \not = u_{i-1}$ if
$u_{i-1}$ is not critical. By inspecting the proof of
Theorem \ref{thm:existence}, we see that the minimization
in \eqref{eq:min} implies that $G_i(u,u_{i-1})\leq 0$, hence
fulfilling \eqref{eq:dgtp}. As such, Theorem \ref{thm:convergence}
below will ensure the convergence of the discrete solution obtained
via \eqref{eq:min} as well.

Let us now introduce our convergence result. To this aim, we specify
the notation with respect to a sequence of partitions 
$\{0=t_0^n<t_n^1<\dots<t_{N^n}^n=T\}$ as
$$
  G_i^n(u,v) =  \phi(u) + \frac{\tau_i^n}{2}\left\| \frac{u  - v}{\tau_i^n}\right\|^2 +
  \frac{\tau_i^n}{2}\| \partial \phi(u )\|^2 -\phi(v).
$$

 \begin{theorem}[Convergence for the De Giorgi scheme]\label{thm:convergence}
Under the assumptions of Theorem \emph{\ref{thm:existence}},
let $u_{i}^n\in D(\partial \phi)$ be such that $u_0^n=u_0$ and 
\begin{equation} \sum_{i=1}^{N^n}  
G_i^n(u_i^n,u_{i-1}^n)^+ \to 0 \ \ \text{as} \ \ n \to
\infty.\label{eq:condi}
\end{equation}
Then $\haz u^n \to u$ converges strongly in
$H^1(0,T;H)$, where $u$ solves the gradient-flow problem \eqref{eq:gf}.
 \end{theorem}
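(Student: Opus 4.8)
The plan is to run the De Giorgi energy–dissipation argument, exploiting that the scheme is built precisely so that summing \eqref{eq:dgxi} reproduces the integrated identity \eqref{eq:generic}. First I would derive the a priori estimate. Writing \eqref{eq:dgxi} as $G_i^n(u_i^n,u_{i-1}^n)=\rho_i^n$ and bounding $\rho_i^n\le (G_i^n(u_i^n,u_{i-1}^n))^+$, summation over $i=1,\dots,N^n$ gives, since $\int_0^T\|(\haz u^n)'\|^2\d r=\sum_i\tau_i^n\|(u_i^n-u_{i-1}^n)/\tau_i^n\|^2$ and $\int_0^T\|\partial\phi(\ove u^n)\|^2\d r=\sum_i\tau_i^n\|\partial\phi(u_i^n)\|^2$,
\begin{equation*}
\phi(u_{N^n}^n)+\tfrac12\int_0^T\|(\haz u^n)'\|^2\d r+\tfrac12\int_0^T\|\partial\phi(\ove u^n)\|^2\d r=\phi(u_0)+\sum_{i=1}^{N^n}\rho_i^n\le\phi(u_0)+\varepsilon^n,
\end{equation*}
where $\varepsilon^n:=\sum_i(G_i^n(u_i^n,u_{i-1}^n))^+\to0$ by \eqref{eq:condi}. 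Since $\phi$ has compact sublevels it is proper, lower semicontinuous, coercive, and bounded below; hence $(u_i^n)$ remains in a fixed compact sublevel (so $\haz u^n,\ove u^n$ are uniformly bounded), $(\haz u^n)$ is bounded in $H^1(0,T;H)$, and $(\partial\phi(\ove u^n))$ is bounded in $L^2(0,T;H)$.

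Next I would extract compactness. The $H^1$ bound yields equi-$1/2$-Hölder continuity of $\haz u^n$ and its values stay in a fixed compact set, so Arzelà–Ascoli gives, along a subsequence (not relabeled), $\haz u^n\to u$ strongly in $C([0,T];H)$ with $\haz u^n\weakto u$ weakly in $H^1(0,T;H)$; moreover $\|\haz u^n-\ove u^n\|_{L^2}\to0$ forces $\ove u^n\to u$ strongly in $L^2(0,T;H)$. Passing to a further subsequence, $\partial\phi(\ove u^n)\weakto\xi$ weakly in $L^2(0,T;H)$. To identify $\xi$ I would split $\partial\phi=\partial\phi_1+\D\phi_2$: local Hölder continuity of $\D\phi_2$ gives $\D\phi_2(\ove u^n)\to\D\phi_2(u)$ strongly, while the strong–weak closure of $\partial\phi_1$ (realized on $L^2(0,T;H)$ as the subdifferential of the integral functional $v\mapsto\int_0^T\phi_1(v)\,\d r$, hence maximal monotone and strongly–weakly closed) yields $\xi-\D\phi_2(u)=\partial\phi_1(u)$ a.e., that is $\xi=\partial\phi(u)$.

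Then I would pass to the $\liminf$ in the estimate above. Lower semicontinuity of $\phi$ gives $\phi(u(T))\le\liminf_n\phi(u_{N^n}^n)$, weak lower semicontinuity of the $L^2$-norm gives $\int_0^T\|u'\|^2\le\liminf_n\int_0^T\|(\haz u^n)'\|^2$, and, crucially using $\xi=\partial\phi(u)$, also $\int_0^T\|\partial\phi(u)\|^2\le\liminf_n\int_0^T\|\partial\phi(\ove u^n)\|^2$. Together with $\sum_i\rho_i^n\le\varepsilon^n\to0$ this produces
\begin{equation*}
\phi(u(T))+\tfrac12\int_0^T\|u'\|^2\d r+\tfrac12\int_0^T\|\partial\phi(u)\|^2\d r\le\phi(u_0).
\end{equation*}
The converse inequality is the crux: by the chain rule, valid in the present $C^{1,\alpha}$-perturbation-of-convex setting, one has $\tfrac{\d}{\d r}\phi(u(r))=(\partial\phi(u),u')$ a.e., so Young's inequality gives $\phi(u_0)-\phi(u(T))=-\int_0^T(\partial\phi(u),u')\,\d r\le\tfrac12\int_0^T\|u'\|^2\d r+\tfrac12\int_0^T\|\partial\phi(u)\|^2\d r$. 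Combining the two bounds forces equality throughout, hence $\tfrac12\int_0^T\|u'+\partial\phi(u)\|^2\d r=0$; exactly as in the computation following \eqref{eq:generic} in the Introduction, this gives $u'+\partial\phi(u)=0$ a.e., so $u$ solves \eqref{eq:gf}.

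Finally, the strong $H^1$ convergence follows from the same equality by a sandwiching argument. The energy estimate gives $\limsup_n\big(\tfrac12\int_0^T\|(\haz u^n)'\|^2+\tfrac12\int_0^T\|\partial\phi(\ove u^n)\|^2\big)\le\phi(u_0)-\phi(u(T))$, which by the just-established equality equals $\tfrac12\int_0^T\|u'\|^2+\tfrac12\int_0^T\|\partial\phi(u)\|^2$; matching this against the two $\liminf$ inequalities above, every inequality is an equality, whence in particular $\int_0^T\|(\haz u^n)'\|^2\to\int_0^T\|u'\|^2$. Convergence of norms together with $(\haz u^n)'\weakto u'$ upgrades to strong convergence of $(\haz u^n)'$ in $L^2(0,T;H)$, which combined with $\haz u^n\to u$ in $C([0,T];H)$ yields $\haz u^n\to u$ strongly in $H^1(0,T;H)$. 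The argument runs along a subsequence; whenever the limiting flow has a unique solution (e.g. $\phi$ $\lambda$-convex) the whole sequence converges. I expect the main obstacle to be precisely the identification $\xi=\partial\phi(u)$ together with the chain rule in the nonsmooth, possibly infinite-dimensional setting, on which the closure of the energy inequality into an equality entirely rests.
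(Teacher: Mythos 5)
Your proposal is correct and follows essentially the same route as the paper's proof: telescoping the scheme into the discrete energy inequality, extracting compactness from the compact sublevels of $\phi$, identifying the weak $L^2$ limit of $\partial\phi(\ove u^n)$ via strong--weak closedness of the subdifferential, and closing the energy inequality into an equality by the Brezis chain rule and Young's inequality, whence $u'+\partial\phi(u)=0$. You in fact spell out two points the paper leaves implicit, namely the identification $\xi=\partial\phi(u)$ through the convex-plus-smooth splitting and maximal monotonicity, and the upgrade to strong $H^1(0,T;H)$ convergence by matching the norm convergence forced by the sandwiched energy terms against the weak convergence of $(\haz u^n)'$.
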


A comment on condition \eqref{eq:condi} is in order. For all $n\in\Nz$, 
let $u_{i}^n\in D(\partial
\phi)$ be the sequence fulfilling \eqref{eq:dgtp} from Theorem
\ref{thm:existence}. Then, 
\begin{align}
  \phi(u_{N^n}^n) &+\frac12\sum_{i=1}^{N^n}
      \tau_i^n\left\|\frac{u_i^n-u_{i-1}^n}{\tau_i^n}\right\|^2
     + \frac12\sum_{i=1}^{N^n}
      \tau_i^n\left\|\partial\phi(u^n_i)\right\|^2 - \phi(u_0)\nonumber \\
& =  \sum_{i=1}^{N^n} G_i^n(u_i^n,u_{i-1}^n) \leq
\frac{L(\tau^n)^\alpha}{1+\alpha}    
\sum_{i=1}^{N^n}\tau_i^n\left\|\frac{u_i^n-u_{i-1}^n}{\tau_i^n}\right\|^{1+\alpha}
\nonumber\\
& \leq C(\tau^n)^\alpha   \left(1 +
\sum_{i=1}^{N^n}\tau_i^n\left\|\frac{u_i^n-u_{i-1}^n}{\tau_i^n}\right\|^2 \right).
     \label{eq:bound0}
\end{align}
For sufficiently small values of $\tau^n>0$, we can absorb the 
last term on the right-hand side by the corresponding term on the left-hand side,
which leads to
$$
  \sum_{i=1}^{N^n}
  \tau_i^n\left\|\frac{u_i^n-u_{i-1}^n}{\tau_i^n}\right\|^2\leq C.
$$
Inserting this estimate into \eqref{eq:bound0} shows that
$$
  \sum_{i=1}^{N^n}G_i^n(u_i^n, u_{i-1}^n) \leq C (\tau^n)^\alpha,
$$ 
so that \eqref{eq:condi} holds. In particular, Theorem \ref{thm:convergence}
implies the convergence for the Euler scheme as well. More generally, condition
 \eqref{eq:condi}  can be seen as a criterion for checking
 convergence, independently of the procedure that produced the
 discrete solution. 

 \begin{proof}
Arguing as in \eqref{eq:bound0}, we have
   \begin{align}
  \phi(u_m^n) &+ \frac12\sum_{i=1}^m
      \tau_i^n\left\|\frac{u_i^n-u_{i-1}^n}{\tau_i^n}\right\|^2
     + \frac12\sum_{i=1}^m
      \tau_i^n\left\|\partial\phi(u^n_i)\right\|^2 - \phi(u_0)\nonumber \\
&\leq \sum_{i=1}^{N^n} G_i^n(u_i^n, u_{i-1}^n)^+\quad
\mbox{for all }m \leq N^n. 
     \label{eq:bound}
   \end{align}
As the right-hand side converges for $n \to \infty$, by assumption, one
deduces the bounds
$$ \sup_m \phi(u^n_m) + \frac12\sum_{i=1}^{N^n}
      \tau_i^n \left\|\frac{u_i^n-u_{i-1}^n}{\tau_i^n}\right\|^2
     +\frac12\sum_{i=1}^{N^n}
      \tau_i^n  \|\partial\phi(u^n_i)\|^2 \leq C.$$
Recall that the sublevels of  $\phi$ are assumed to be compact. We
can hence apply a diagonal
extraction argument (without relabeling) and obtain  
\begin{align}
  &\haz u^n \to u \quad \text{strongly in} \  \ C([0,T];H)
                   \ \ \text{and weakly in} \ \
                   H^1(0,T;H), \nonumber \\ 
&\ove u^n \to u \quad \text{strongly in} \  \ L^\infty(0,T;H), 
\nonumber \\ 
  &\partial\phi(\ove u^n) \to \partial\phi(u)\quad \text{weakly in} \  \ L^2(0,T;H). \label{conv3}
\end{align}
In order to identify the limit in \eqref{conv3}, 
we have used the strong-weak closure of the
subdifferential $\partial \phi$, which follows from the very definition of the subdifferential. 
In particular, 
$\ove u^n(t) \to u(t)$ for all $t \in (0,T)$.
Fix now  $t \in (0,T)$
and, for all $n$, choose $m$ such that $t^n_{m-1} <t \leq t^n_m$.
By passing to the limit inferior as $n\to \infty$ in estimate
\eqref{eq:bound} and using $\phi(u^n_m) =  \phi(\ove u^n(t))$, it follows that
\begin{align} 
\phi(u(t)) + \frac12 \int_0^t \|u'\|^2\d r 
+ \frac12 \int_0^t  \|\partial\phi(u)\|^2\d r \leq \phi (u_0). \nonumber 
\end{align}
Therefore, we can use the chain
rule \cite[Lemme 3.3, p. 73]{Brezis73} to conclude that
\begin{align*}
  \frac12 &\int_0^t \|u'\|^2\d r + \frac12  \int_0^t \|\partial\phi(u)\|^2 \d r
  \leq \phi (u_0) -
  \phi(u(t)) \\
  &= - \int_0^t (\phi\circ u)'\d r 
  =- \int_0^t (\partial\phi(u),u')\d r 
	\leq \frac12 \int_0^t \|u'\|^2\d r + \frac12 \int_0^t  \|\partial\phi(u)\|^2\d r.
\end{align*}
In particular, all these inequalities are actually equalities and 
$$\frac12   \|u'\|^2 + \frac12 \|\partial\phi(u)\|^2 = -(\partial\phi(u),u') \quad \text{a.e. in}
\ \ (0,T).$$
This implies that the equality
\eqref{eq:generic} holds and that 
   $u$ is a solution of
the gradient-flow problem \eqref{eq:gf}.
\end{proof}

 Theorem \ref{thm:convergence} can be formulated for gradient
flows on the whole semiline
$t \in [0,\infty)$ as well, by asking the convergence condition to be 
$$\sum_{i=1}^\infty G_i^n(u_i^n,u^n_{i-1})^+ \to 0 \ \ \text{as} \ \
n \to \infty.$$
Indeed, estimate \eqref{eq:bound0} holds in case $N^n=\infty$ as well. In
order to pass to the limit, one uses again the analogue of
\eqref{eq:bound}. A localization and diagonal-extraction argument
entails strong compactness. 

We present some error estimates for the De Giorgi scheme in finite
dimensions.

\begin{proposition}[Error control for the De Giorgi
  scheme in $\Rz^d$]\label{error} Let
  $\phi\in C^2(\Rz^d)$ be bounded from below and  $u_i$
  and $v_i$ fulfill $u_0=v_0$,
\begin{equation} G_i(u_i,u_{i-1})=0, \ \ \text{and} \ \ v_i\in\argmin
G_i(\cdot,v_{i-1}),\label{eq:schemes}
\end{equation}
  respectively. Then for all $i =1,\dots,N$
  \begin{align}
    &\| u(t_i) - v_i\|\leq C \tau, \label{eq:dge2}\\
 &\|  u(t_i)-u_i\|\leq C \tau^{1/2}, \label{eq:dge1}
  \end{align}
where $u$ is the unique solution of \eqref{eq:gf}.
\end{proposition}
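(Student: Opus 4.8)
The plan is to establish both bounds in \eqref{eq:dge1}--\eqref{eq:dge2} by a single consistency-plus-stability argument, the two different rates stemming only from the order of the one-step consistency error produced by the two definitions in \eqref{eq:schemes}. Since $\phi\in C^2(\Rz^d)$, the gradient $\D\phi$ is $C^1$, hence locally Lipschitz, so \eqref{eq:gf} indeed has a unique solution $u$. As a preliminary I would record an a priori bound keeping all iterates in a fixed ball: summing the scheme identities and using that $\phi$ is bounded from below gives, exactly as in \eqref{eq:bound0}, $\sum_i\tau_i\|(u_i-u_{i-1})/\tau_i\|^2\le C$ and the same for $v_i$, whence by Cauchy--Schwarz $u_m,v_m$ stay in a fixed ball $B$ depending only on $u_0$, $T$ and $\phi(u_0)-\inf\phi$. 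On $B$ one then has uniform constants: a Lipschitz constant $L$ for $\D\phi$ and bounds $M$ on $\|\D\phi\|$ and $K$ on $\|\D^2\phi\|$. These also yield the per-step bound $\|u_i-u_{i-1}\|,\|v_i-v_{i-1}\|\le C\tau_i$, to be used repeatedly.

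For the variational iterate $v_i$ the key is that it is an interior minimizer of the coercive $C^1$ function $u\mapsto G_i(u,v_{i-1})$, so its first-order optimality condition reads
$$ \frac{v_i-v_{i-1}}{\tau_i}+\D\phi(v_i)+\tau_i\,\D^2\phi(v_i)\D\phi(v_i)=0. $$
Thus $v_i$ is an implicit Euler step perturbed by $-\tau_i^2\D^2\phi(v_i)\D\phi(v_i)$, which is $O(\tau_i^2)$ on $B$. Comparing with the exact flow through the Taylor identity $u(t_i)=u(t_{i-1})-\tau_i\D\phi(u(t_i))+O(\tau_i^2)$ (valid since $u\in C^2$), subtracting, testing the resulting relation for $e_i:=u(t_i)-v_i$ against $e_i$, and using $(\D\phi(u(t_i))-\D\phi(v_i),e_i)\ge -L\|e_i\|^2$, I obtain $(1-\tau_iL)\|e_i\|\le\|e_{i-1}\|+C\tau_i^2$. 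A discrete Gronwall argument with $e_0=0$ and $\sum_j\tau_j^2\le\tau\sum_j\tau_j\le\tau T$ then gives $\|u(t_i)-v_i\|\le C\tau$, that is \eqref{eq:dge2}. Equivalently one may propagate the one-step error by the continuous stability estimate \eqref{eq:stability}, as in the proof of Theorem \ref{thm:r2}(iii).

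For the exact-residual iterate $u_i$ no optimality is available, so instead I would complete the square in $G_i(u_i,u_{i-1})=0$ to obtain
$$ \frac{1}{2\tau_i}\bigl\|(u_i-u_{i-1})+\tau_i\D\phi(u_i)\bigr\|^2 = -\bigl(\phi(u_i)-\phi(u_{i-1})-(\D\phi(u_i),u_i-u_{i-1})\bigr). $$
A second-order Taylor expansion bounds the right-hand side by $\tfrac{K}{2}\|u_i-u_{i-1}\|^2=O(\tau_i^2)$, so the consistency defect is only $\|(u_i-u_{i-1})+\tau_i\D\phi(u_i)\|=O(\tau_i^{3/2})$. Running the identical error recursion, but now with per-step error $C\tau_i^{3/2}$, the discrete Gronwall argument yields $\|u(t_i)-u_i\|\le C\sum_j\tau_j^{3/2}\le C\tau^{1/2}\sum_j\tau_j\le C\tau^{1/2}$, which is \eqref{eq:dge1}.

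The main obstacle, and the point explaining the gap between the two rates, is obtaining the \emph{sharp} one-step order for each scheme. The energy identity $G_i=0$ alone cannot beat $O(\tau^{3/2})$, since the completed square only controls $\|(u_i-u_{i-1})+\tau_i\D\phi(u_i)\|^2$ by $O(\tau_i^3)$ and this is generically attained; the variational scheme does better precisely because its Euler--Lagrange equation produces the genuine cancellation upgrading the defect to $O(\tau_i^2)$. Making this dichotomy rigorous is the crux, after which the accumulation is routine. Secondary care is needed to justify the differentiability/interior-minimizer step giving the optimality condition, and to make the a priori boundedness that fixes $L,K,M$ independent of the Lipschitz data so as not to argue circularly.
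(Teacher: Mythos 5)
Your proposal is correct and follows essentially the same route as the paper: the Euler--Lagrange equation $\frac{v_i-v_{i-1}}{\tau_i}+\D\phi(v_i)+\tau_i\D^2\phi(v_i)\D\phi(v_i)=0$ giving an $O(\tau_i^2)$ one-step defect for $v_i$, and the completed-square identity derived from $G_i(u_i,u_{i-1})=0$ together with a second-order Taylor expansion giving only an $O(\tau_i^{3/2})$ defect for $u_i$, are exactly the paper's two key steps (relations \eqref{eq:const1} and \eqref{eq:const4}--\eqref{eq:const5}). The only cosmetic difference is that you accumulate the one-step errors by a direct discrete Gronwall recursion against the exact solution, while the paper compares with the local flow from the previous iterate and propagates via the stability estimate \eqref{eq:stability}, an equivalence you yourself point out.
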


\begin{proof} 
  Note first that \eqref{eq:gf} is uniquely
  solvable and that its solution $u$ is bounded. The
  existence of  $u_i$ is due to Theorem \ref{thm:existence} and that
  of $v_i$ follows by an application of the direct method.
  Owing to the
  energy equalities \eqref{eq:balance} and \eqref{eq:dg} and
  the minimality of $v_i$, 
	we conclude that $u$ is bounded uniformly in time 
	and that the values $u_i$ and $v_i$ are also bounded independently of $i$.  

{\em Step 1: proof of \eqref{eq:dge2}.}
Let us start by considering $v_i$. We show that the
consistency error is of order $\tau^2$. To this aim, let $w$
be the unique solution to $w'+\D\phi(w)=0$ with
$w(t_{i-1})=v_{i-1}$. Note that $w\in C^2$ and that $w''$ is bounded
for all times, depending on $\phi(v_{i-1})$ only (hence, just on $\phi(u_0)$). 
We deduce from the minimality of $v_i$ that 
$$
  0 = \D \phi(v_i) + \frac{1}{\tau_i}(v_i - v_{i-1}) + \tau_i
  \D^2\phi(v_i)\D \phi(v_i),
$$
which is equivalent to 
\begin{align}
  v_i &= v_{i-1} - \tau_i \D \phi(v_i) - \tau_i^2 \D ^2 \phi(v_i) \D
  \phi(v_i) .
\label{eq:const1}
\end{align}
On the other hand, a Taylor expansion ensures that
\begin{align}
w(t_i) = w(t_{i-1}) + \tau_i w'(t_{i})- \frac{\tau_i^2}{2}w''(\eta)
= v_{i-1} - \tau_i \D \phi (w(t_{i}))-
\frac{\tau_i^2}{2}w''(\eta) \label{eq:const2}
\end{align}
for some $\eta \in [t_{i-1},t_{i}]$. By taking the difference between
\eqref{eq:const2} and \eqref{eq:const1} and multiplying it by
$w(t_i)-v_i$, we find that 
\begin{align}
  \|&w(t_i)-v_i\|^2\nonumber\\
&\leq C\tau_i  \|w(t_i)-v_i\|^2 +  \left(\tau_i^2
  \|\D^2 \phi(v_i)\|\|\D \phi(v_i)\| + \frac{\tau_i^2}{2}\|w''(\eta)\| \right) \|w(t_i)-v_i\|\nonumber\\
&\leq C\tau_i  \|w(t_i)-v_i\|^2 +C \tau_i^2 \|w(t_i)-v_i\|, \nonumber
\end{align}
which, for sufficiently small values of $\tau_i>0$, implies that 
\begin{equation}
  \label{eq:const11}
  \|w(t_i)-v_i\| \leq C \tau_i^2.
\end{equation}
We now argue as in \eqref{eq:stability} and deduce from \eqref{eq:const11} that
\begin{align}
 \|u(t_i) - v_i\| &\leq \|u(t_i) - w(t_i)\|+
  \|w(t_i) - v_{i}\| \nonumber\\
&\leq \e^{L\tau_i}\|u(t_{i-1})
  - v_{i-1})\| +
  \|w(t_i) - v_{i}\| \nonumber\\
&\leq \e^{L\tau_i}\|u(t_{i-1})
  - v_{i-1})\|+ C\tau_i^2 \nonumber\\
&\leq \dots \leq
  Ci\max\{1,\e^{L T}\} \tau^2 \leq C\tau, \label{eq:const3}
\end{align}
from which \eqref{eq:dge2} follows.

{\em Step 2: proof of \eqref{eq:dge1}.}
Let us address the error control for $u_i$. We use $G_i(u_i,u_{i-1})=0$ to compute
\begin{align}
  \|u_i -u_{i-1} + \tau_i\D \phi(u_i)\|^2  
  &= \|u_i-u_{i-1}\|^2 + \tau_i^2\|\D\phi(u_i)\|^2 
	+ 2\tau_i \D\phi(u_i)\cdot (u_i - u_{i-1}) \nonumber\\
  &= 2\tau_i (-\phi(u_i) + \phi(u_{i-1}))
    + 2\tau_i \D\phi(u_i)\cdot (u_i - u_{i-1})\nonumber\\
  &= \tau_i \D^2\phi(\xi)(u_i - u_{i-1})\cdot (u_i-u_{i-1})
	\leq C\tau_i^3, \label{eq:const4}
\end{align}
where we used a Taylor expansion and $\xi$ belongs to the segment joining
$u_i$ and $u_{i-1}$. For the last inequality, we also used the estimate
$\|u_i-u_{i-1}\|\leq C\tau_i$.

Next, let $w$ be the unique solution to $w'+\D\phi(w)=0$ with
$w(t_{i-1})=u_{i-1}$. A Taylor expansion shows that
\begin{equation}
  \label{eq:const5}
  w(t_i) = u_{i-1} - \tau_i \D \phi(u_i) + \tau_i(\D\phi(u_i) -
  \D\phi(u_{i-1})) + \frac{\tau_i^2}{2}w''(\eta).
\end{equation}
By \eqref{eq:const5} and \eqref{eq:const4}, the estimate
\begin{align*}
  \|w(t_i) -u_i\|  
	&= \left\|u_{i-1} - \tau_i \D \phi(u_i) + \tau_i(\D\phi(u_i) -
  \D\phi(u_{i-1})) + \frac{\tau_i^2}{2}w''(\eta) -u_i\right\| \\
  &\leq \left\|u_i -u_{i-1} + \tau_i\D \phi(u_i)\right\| + C\tau_i^2 
	\leq C\tau_i^{3/2}
\end{align*}
follows. The error control \eqref{eq:dge1} can be proved by
 arguing similarly to   
\eqref{eq:const3}.
\end{proof}

 Differently from the convergence Theorem \ref{thm:convergence},
the proof of Theorem \ref{error} heavily relies on the finiteness of
the time interval $[0,T]$, for it hinges on a Gronwall-like argument,
see \eqref{eq:const3}. 

The convergence rates in \eqref{eq:dge2}-\eqref{eq:dge1} are sharp, as
 the one-dimensional test of Section \ref{sec:compare} shows,
see Figure \ref{fig:compare1}.

\section{Extensions}\label{sec:ext}

We discuss some extensions of the De Giorgi scheme  to other nonlinear
evolution equations.

\subsection{Generalized gradient flows}

The analysis of the De Giorgi scheme can be extended to 
generalized gradient flows, namely
\begin{equation}
  \label{eq:gf3}
  \partial \psi (u,u') + \partial \phi(u)\ni 0 \quad \text{for a.e.} \
  t\in (0,T), \quad u(0)=u_0.
\end{equation}
Here, $\psi: H \times H \to [0,\infty)$ and $\partial \psi
(u,u') $ denotes partial subdifferentiation with respect to the second
variable only and we recall that for simplicity, $\partial\phi$ is still
assumed to be single-valued.  
More precisely, we assume that
\begin{align}
&\bullet\ \forall u \in H:\ \psi(u,\cdot) \text{ is convex and lower
  semicontinuous}, \label{eq:psi1}\\
&\bullet\ \mbox{the mapping }H\times H\times H\to\Rz, 
  (u,v,w) \mapsto \psi(u,v) + \psi^*(u,w),  \nonumber \\
& \quad\ \text{is weakly lower semicontinuous}, \nonumber \\ 
&\bullet\ \exists c>0, \, p>1,\, \forall u,\, v,\, w\in H: \ \ \psi (u,v)+\psi^*(u,w)\geq c \|v\|^p+ c\|w\|^{p'},
 \label{eq:psi5}
\end{align}
where  $p'=p/(p-1)$ and the
Legendre-Fenchel conjugation is taken with respect to the second
variable only. An example for $\psi$ satisfying \eqref{eq:psi1}-\eqref{eq:psi5} is
$\psi(u,u') = \beta(u)|u'|^p$, where $p>1$ and $\beta$ is sufficiently smooth,
uniformly positive, and bounded. Note that, as a consequence of
\eqref{eq:psi5}, for any $u\in H$ one has
\begin{equation}
w \in \partial \psi(u,0)  \ \Leftrightarrow  \ 0=\psi(u,0)+\psi^*(u,w)
\stackrel{\eqref{eq:psi1}}{\geq} c|w|^q  \ \Rightarrow \ w =0.\label{eq:psi2}
\end{equation}

The analog of equality \eqref{eq:generic} for generalized gradient flows reads as
\begin{equation}
  \label{eq:generic3}
  \phi(u(t)) + \int_0^t \psi(u,u')\d r 
	+ \int_0^t \psi^*(u,-\partial \phi(u))\d r = \phi(u_0) 
\end{equation}
for all $t\in [0,T]$. 
This can be checked by equivalently rewriting
\eqref{eq:gf3} by using Fenchel's duality, $y\in \partial \psi(u,u')
\Leftrightarrow \psi(u,u') + \psi^*(u,y) = (y,u')$ with $y=-\partial \phi(u)$, as
$$ \psi(u,u')+ \psi^*(u,-\partial \phi(u)) + (\partial \phi(u),u')=0$$
and integrating in time.
Correspondingly, one modifies the functionals $G_i$ as follows
\begin{equation}
\widetilde G_i(u,v):=\phi(u) +  {\tau_i}\psi\left(v,\frac{u-v}{\tau_i}\right)
    +{\tau_i}\psi^* (v,-\partial  \phi(u) ) - \phi(v).\label{eq:gggf}
\end{equation}
Given the initial value $u_0 \in
    D(\phi)$, the De Giorgi scheme consists in finding $u_i \in
    D(\partial \phi)$  with $\widetilde G_i(u_i,u_{i-1})= \rho_i$ and
		sufficiently small $\rho_i^+$.
Note that the first occurrence in $\psi$ and $\psi^*$ in
\eqref{eq:gggf} is $v=u_{i-1}$, so that the scheme is
implicit.
Given a sequence of partitions, we use the notation
$$
 \widetilde G_i^n(u,v):=\phi(u) +  {\tau_i^n}\psi\left(v,\frac{u-v}{\tau_i^n}\right)
    +{\tau_i^n}\psi^* (v,-\partial  \phi(u) ) - \phi(v).
$$

\begin{theorem}[De Giorgi scheme for generalized gradient flows]
Assume \eqref{eq:psi1}-\eqref{eq:psi5} and let $\phi = \phi_1 + \phi_2$  have compact sublevels, $\phi_1:H \to
(-\infty,\infty] $ be convex, proper, lower semicontinuous with
$\partial \phi_1$ single-valued,  and $\phi_2\in C^{1,\alpha}_{\rm loc}(H)$ for $\alpha \in (0,1]$. Let $u_{i-1}\in D(\partial \phi)$ satisfy
$\partial \phi (u_{i-1}) \not = 0$. Then there
exists $u_i\in D(\partial \phi)$   with $u_i \not = u_{i-1}$ and
\begin{equation}\label{eq:dgtp2}
\widetilde G_i(u_i, u_{i-1})\leq \frac{L}{1+\alpha}
 \|u_i - u_{i-1}\|^{1+\alpha},
\end{equation}
 where $L$ is the H\"older constant of $\D \phi_2$. In case $\phi$ is convex, namely $\phi_2=0$, and
 $v\mapsto \psi^*(u, - \partial \phi(v))$ is continuous (with respect to the
strong topology)
along segments in $D(\partial \phi)$, for all $u\in H$, one can find
$u_i\not = u_{i-1}$ such that $\widetilde G_i(u_i, u_{i-1})=0$.

Let $u_{i}^n\in D(\partial
\phi)$ be such that $u_0^n=u_0$ and 
\begin{equation} \sum_{i=1}^{N^n}  
\widetilde G_i^n(u_i^n,u_{i-1}^n)^+ \to 0 \ \ \text{as} \ \ n \to
\infty.\label{eq:condi2}
\end{equation}
Then $\haz u^n \to u$ converges weakly in
$W^{1,p}(0,T;H)$, and $u$ solves the generalized gradient-flow problem \eqref{eq:gf3}.
\end{theorem}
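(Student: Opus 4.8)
The plan is to run the argument of Theorems \ref{thm:existence} and \ref{thm:convergence} with the quadratic dissipation replaced by the pair $(\psi,\psi^*)$, using Fenchel's duality in place of the elementary completion of squares. For the existence part I would first solve the generalized implicit Euler (minimizing-movement) step
\[
u^e \in \argmin_{u \in H}\left(\phi(u) + \tau_i\,\psi\Big(u_{i-1},\frac{u-u_{i-1}}{\tau_i}\Big)\right),
\]
which is solvable by the direct method: the objective has compact sublevels since $\phi$ has compact sublevels and $\psi(u_{i-1},\cdot)\geq c\|\cdot\|^p\geq 0$ by \eqref{eq:psi5}, and it is lower semicontinuous by \eqref{eq:psi1}. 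Its Euler--Lagrange relation is $-\partial\phi(u^e)\in\partial\psi(u_{i-1},(u^e-u_{i-1})/\tau_i)$, so Fenchel's equality collapses the two dissipation terms of $\widetilde G_i$ into a scalar product and yields $\widetilde G_i(u^e,u_{i-1}) = \phi(u^e)-\phi(u_{i-1})-(\partial\phi(u^e),u^e-u_{i-1})$. This right-hand side is controlled exactly as in \eqref{eq:aa}: the convex part $\phi_1$ contributes a nonpositive term by the subgradient inequality while the $C^{1,\alpha}$ part $\phi_2$ is bounded by $L\|u^e-u_{i-1}\|^{1+\alpha}/(1+\alpha)$, which is \eqref{eq:dgtp2}; that $u^e\neq u_{i-1}$ follows from $\partial\phi(u_{i-1})\neq0$.

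In the convex case $\phi_2=0$ I would repeat the segment argument of Theorem \ref{thm:existence}. Setting $g(\lambda)=\widetilde G_i(u_\lambda,u_{i-1})$ on $u_\lambda=\lambda u^e+(1-\lambda)u_{i-1}$, the continuity of $\lambda\mapsto\phi(u_\lambda)$, the convexity of $\psi(u_{i-1},\cdot)$, and the assumed continuity of $v\mapsto\psi^*(u_{i-1},-\partial\phi(v))$ along segments make $g$ continuous on $[0,1]$. Here $g(0)=\tau_i\big(\psi(u_{i-1},0)+\psi^*(u_{i-1},-\partial\phi(u_{i-1}))\big)\geq c\,\tau_i\|\partial\phi(u_{i-1})\|^{p'}>0$ by \eqref{eq:psi5} and $\partial\phi(u_{i-1})\neq0$, whereas $g(1)=\widetilde G_i(u^e,u_{i-1})\leq 0$; the intermediate value theorem then furnishes $\lambda\in(0,1)$ with $\widetilde G_i(u_\lambda,u_{i-1})=0$ and $u_\lambda\neq u_{i-1}$.

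For convergence, summing the $\widetilde G_i^n$ inequalities and absorbing the $\mathrm{O}((\tau^n)^\alpha)$ residual as in \eqref{eq:bound0}, the coercivity \eqref{eq:psi5} gives uniform bounds on $\sum_i\tau_i^n\|(u_i^n-u_{i-1}^n)/\tau_i^n\|^p$, on $\sum_i\tau_i^n\|\partial\phi(u_i^n)\|^{p'}$, and on $\sup_m\phi(u_m^n)$. Compact sublevels of $\phi$ then yield, along a subsequence, $\haz u^n\rightharpoonup u$ weakly in $W^{1,p}(0,T;H)$ and strongly in $C([0,T];H)$, $\ove u^n\to u$, and $\partial\phi(\ove u^n)\rightharpoonup\partial\phi(u)$ weakly in $L^{p'}(0,T;H)$ by the strong--weak closure of $\partial\phi$. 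The decisive step is to pass to the limit inferior in the summed inequality through the joint weak lower semicontinuity of $(u,v,w)\mapsto\psi(u,v)+\psi^*(u,w)$ assumed in \eqref{eq:psi1}--\eqref{eq:psi5}, applied to the time-integral functional with state argument the shifted interpolant $u_{i-1}^n\to u$ (strongly), rate $(\haz u^n)'\rightharpoonup u'$ and force $-\partial\phi(\ove u^n)\rightharpoonup-\partial\phi(u)$ (weakly). This produces
\[
\phi(u(t)) + \int_0^t \psi(u,u')\,\d r + \int_0^t \psi^*(u,-\partial\phi(u))\,\d r \leq \phi(u_0).
\]

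Finally, the chain rule for $t\mapsto\phi(u(t))$ gives $\phi(u_0)-\phi(u(t))=\int_0^t(-\partial\phi(u),u')\,\d r$, while the Fenchel--Young inequality yields $\psi(u,u')+\psi^*(u,-\partial\phi(u))\geq -(\partial\phi(u),u')$ pointwise; chaining these with the displayed estimate forces every inequality to be an equality, hence the Fenchel equality $\psi(u,u')+\psi^*(u,-\partial\phi(u))=(-\partial\phi(u),u')$ holds a.e., i.e. $-\partial\phi(u)\in\partial\psi(u,u')$, which is \eqref{eq:gf3}. The main obstacle I anticipate is precisely this lower-semicontinuity passage in the dissipation, where the strongly converging state variable $u_{i-1}^n$ must be reconciled with the only weakly converging rate and force variables: assumption \eqref{eq:psi1}--\eqref{eq:psi5} is tailored for this, but its transfer to the time-integral functional requires a lower-semicontinuity result for normal integrands, and one must separately confirm that the chain rule remains valid in this generalized $W^{1,p}$ setting.
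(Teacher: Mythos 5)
Your proposal matches the paper's proof essentially step for step: the same minimizing-movement Euler step combined with Fenchel duality and the H\"older estimate \eqref{eq:aa} to obtain \eqref{eq:dgtp2}, the same intermediate-value argument along the segment $u_\lambda$ in the convex case, and the same chain of coercivity bounds, compactness, weak lower semicontinuity, and chain-rule-plus-Fenchel--Young argument for convergence. The only differences are cosmetic: you compute $\widetilde G_i(u^e,u_{i-1})$ directly via Fenchel's equality rather than first rewriting \eqref{eq:euler2} as the paper does, and you explicitly flag the normal-integrand lower semicontinuity and the $W^{1,p}$ chain rule, which the paper invokes without further comment.
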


\begin{proof} 
   By adapting the argument of Theorem \ref{thm:existence}, we wish to find
   $u^e$ solving the Euler scheme
 \begin{equation}
\partial \psi\left(u_{i-1},\frac{u^e-u_{i-1}}{\tau_i}\right)
+ \partial \phi(u^e)\ni 0.\label{eq:euler2}
\end{equation}
This can be done by letting
$$ u^e \in \argmin_{u\in H}  \left(\psi\left(u_{i-1},\frac{u-u_{i-1}}{\tau_i}\right)
+ \phi(u) \right).$$
This minimization problem is solvable as $\psi(u_{i-1},\cdot)$
and $\phi$ are lower semicontinuous, $\psi $ is nonnegative, 
and the sublevels of $\phi$ are
compact. Hence the Direct Method applies.
Note that the scheme in \eqref{eq:euler2} is implicit. 
As $\partial \phi(u_{i-1}) \not = 0$  and $\partial \psi(u_{i-1},0)=0$
from \eqref{eq:psi2}, we readily check that $u^e \not = u_{i-1}$.  

We rewrite relation \eqref{eq:euler2} equivalently as
$$\tau_i \psi\left(u_{i-1},\frac{u^e-u_{i-1}}{\tau_i}\right)  +
\tau_i \psi^*\left(u_{i-1},-\partial \phi(u^e) \right) +
\left(\frac{u^e-u_{i-1}}{\tau_i},\partial \phi(u^e) \right)=0.$$
By arguing as in \eqref{eq:aa}, we have that
$$ \Bigl(\frac{u^e-u_{i-1}}{\tau_i},\partial \phi(u^e)\Bigr)\geq
\phi(u^e)-\phi(u_{i-1})-\frac{L}{1+\alpha}\|u^e - u_{i-1}\|^{1+\alpha}.$$
In particular,
it turns out that \eqref{eq:dgtp2} holds for $u_i = u^e$.

In case $\phi_2=0$, we find that $\widetilde G_i(u^e,u_{i-1}) \leq
0$. Consider the map $\lambda \mapsto g(\lambda)
=\widetilde G_i(u_\lambda,u_{i-1})$ for $u_\lambda = \lambda u^e +
(1-\lambda)u_{i-1}$ and $\lambda\in[0,1)$. 
It is continuous, for  the functions $v \mapsto \phi(u_{i-1},v)$,
$v\mapsto \psi^*(u_{i-1}$, $-\partial \phi(v))$, and $v \mapsto\phi(v)$ are all 
continuous along the segments of $D(\partial \phi)$. Since $\partial
\phi(u_{i-1})\not =0$, we conclude that  
$$
  g(0)=\tau_i \psi (u_{i-1},0)  + \tau_i \psi^*(u_{i-1},
  -\partial \phi(u_{i-1})) \stackrel{\eqref{eq:psi5}}{\geq}
  c\tau_i\|\phi(u_{i-1})\|^q >0.
$$
In case $g(1)=0$ we have nothing to prove. If $g(1)<0$,
there exists $\lambda^* \in (0,1)$ such that $g(\lambda^*)= \widetilde
G_i(u_{\lambda^*},u_{i-1})=0$.
Now, let $u_i^n$ fulfill  \eqref{eq:condi2}. The  coercivity
\eqref{eq:psi5} implies that 
\begin{align}
  \phi(&u_m^n) + \sum_{i=1}^m \tau_i^n \left(c\left\| \frac{u_i^n -
  u_{i-1}^n}{\tau_i^n}\right\|^p + c\|\partial \phi(u_i^n)\|^{p'}
  -\frac1c\right) - \phi(u_0)\nonumber\\
&\leq
 \phi(u_m^n)  +\sum_{i=1}^m \tau_i^n \left(\psi\left(u_{i-1}^n,\frac{u_i^n -
  u_{i-1}^n}{\tau_i^n}\right) + \psi^*(u_{i-1}^n, - \partial \phi(u_i^n))
  \right) - \phi(u_0) \nonumber\\
&= \sum_{i=1}^{N^n}\widetilde
  G_i^n(u_i^n,u_{i-1}^n)^+\quad \mbox{for all }m \leq N^n. \label{eq:mandami}
\end{align}
As the right-hand side converges for $n\to \infty$, we infer that 
$\haz u^n$, $\ove u^n$, and
$\partial\phi(\ove u^n) $ are bounded in $W^{1,p}(0,T;H)$,
$L^\infty(0,T;H)$, and $L^{p'}(0,T;H)$, respectively, and that
$\phi(\ove u^n)$ is bounded. Hence, we can extract subsequences (not relabeled)
such that, as $n\to\infty$,
\begin{align*}
  \haz u^n \to u &\quad \text{strongly in} \  \ C([0,T];H)
                   \ \ \text{and weakly in} \ \
                   W^{1,p}(0,T;H),\\
\ove u^n, \, \underline u^n \to u &\quad \text{strongly in} \  \ L^\infty(0,T;H), \\
  \partial\phi(\ove u^n) \to \partial\phi(u)&\quad \text{weakly in} \  \ L^{p'}(0,T;H), 
\end{align*}
where $\underline u^n(t) = u_{i-1}$ for all $t \in
(t^n_{i-1},t^n_i]$. These convergences and  \eqref{eq:condi2} allow us to
pass to the limit inferior $n\to\infty$ in the last inequality in  
\eqref{eq:mandami}, giving
$$
  \phi(u(t)) + \int_0^t \psi(u,u')\d r + \int_0^t
  \psi^*(u,-\partial\phi(u))\d r - \phi(u_0)\leq 0
$$
for all $t \in [0,T]$. Eventually, the chain rule $(\phi \circ u)' = (\partial
\phi(u), u')$ (now in $W^{1,p}(0,T;$ $H)$) implies the energy
equality \eqref{eq:generic3}. We conclude that $u$ solves \eqref{eq:gf3}.
Indeed
\begin{align*}
 \int_0^t &\psi(u,u')\d r +\int_0^t\psi^*(u,-\partial \phi(u))\d r
\leq \phi(u_0)-\phi(u(t))=-\int_0^t (\phi \circ u)'\d r \\
 &=\int_0^t (-\partial\phi(u),u')\d r
 \leq \int_0^t \psi(u,u')\d r+\int_0^t \psi^*(u,-\partial\phi(u))\d r,
\end{align*}
so that all inequalities are actually equalities.
\end{proof}


\subsection{GENERIC flows} 

Another extension of the De Giorgi scheme concerns GE\-NER\-IC flows 
(General Equations for Non-Equilibrium  Reversible-Irreversible Coupling),
\begin{equation}
  \label{eq:gf4}
  u' = L \D E(u) -K \partial \phi(u)  \quad \text{for a.e.} \
  t\in (0,T), \quad u(0)=u_0.
\end{equation}
Here, $\phi$ plays the role of an entropy (up to the sign), being
nonincreasing in time; $E: H \to \Rz$ is an energy, being
conserved along trajectories; $K:H \to H$ is the so called Onsager operator, 
being linear, symmetric, and positive definite; and $L:H \to H$ is linear,
symplectic, and antiselfadjoint ($L^* = -L$).
One also assumes that the following compatibility conditions hold:
\begin{equation}
  \label{eq:gen}
  L^* \partial \phi (u) = K^* \D E(u)=0.
\end{equation}

The GENERIC formalism \cite{Grmela} 
is a systematic approach for the variational formulation of physical
models and is particularly tailored to the unified
treatment of coupled conservative and dissipative dynamics. 
As such, GENERIC has been applied to a variety of situations ranging from
complex fluids \cite{Grmela}, to dissipative quantum mechanics \cite{Mielke13}, to
thermomechanics \cite{Mielke11},
and to the Vlasov-Fokker-Planck equation \cite{Peletier}. 

Let $\psi(u)=(K^{-1}u,u)/2$.
The equivalent of \eqref{eq:generic} for GENERIC flows is 
\begin{equation}
  \label{eq:generic4}
  \phi(u(t)) + \int_0^t \psi(u'-L\D E(u))\d r + \int_0^t \psi^*(-\partial
  \phi(u))\d r = \phi(u_0) 
\end{equation}
for all $t\in [0,T]$. Indeed, given the compatibility \eqref{eq:gen},
equation \eqref{eq:generic4} and the equation in \eqref{eq:gf4} are
equivalent:
\begin{align*}
& \eqref{eq:gf4}  \ \Leftrightarrow\ 
  \psi(u'{-}L\D E(u))+\psi^*({-}\partial\phi(u)) 
	- \big( u'{-}L\D E(u), {-} \partial \phi(u)\big) =0 \ \text{a.e.}\\
& \Leftrightarrow  \ \frac{\d}{\d t} \phi\circ u + \psi(u'{-}L\D
  E(u))+\psi^*({-}\partial\phi(u)) =0 \ \text{a.e.} \  \Leftrightarrow \
  \eqref{eq:generic4}.
\end{align*}
 
The De Giorgi scheme can be extended to this case as well, by
considering the functional
$$ \ove G_i(u,v):=\phi(u) +  {\tau_i}\psi\left(\frac{u-v}{\tau_i} - L \D E(u)\right)
    +{\tau_i}\psi^* (-\partial  \phi(u) ) - \phi(v).$$

The existence of suitable De Giorgi solutions, namely $u_i \in
D(\partial \phi)$ such that $\ove G_i(u_i,u_{i-1})$ is small enough,
is still open.
By {\it assuming} that there exists a sequence $u_i$ fulfilling condition
\eqref{eq:condi} written for $\ove G$ instead of $G$, 
a necessary condition for convergence is that the real function
$(u,u')\mapsto \psi(u' - L\D E(u)) +\psi^*(-\partial\phi(u))$
is lower semicontinuous, which follows, for instance, if $\psi$ and
$\phi$ are convex and $E$ is $C^1$. 


\subsection{Curves of maximal slope in metric spaces}

The variational interpretation of the De Giorgi scheme from
\eqref{eq:min} can be extended to evolutions in metric spaces. Let $(X,d)$
be a complete metric space and $\phi:X \to [0,\infty]$ be lower semicontinuous. A locally absolutely continuous curve
$u:[0,T]\to X$ is said to be a {\it curve of maximal slope}  for the
functional $\phi$ if
$\phi\circ u$ is nonincreasing, $u(0)=u_0$, and 
\begin{equation}
  \label{eq:curves}
  \phi(u(t)) + \frac12\int_0^t |u'|^2(s)\d s + \frac12 \int_0^t |\partial
  \phi|^2(u(s))\, \d s = \phi(u_0)\quad \mbox{for all }t \in [0,T].
\end{equation}
It is beyond the purpose of this note to provide a comprehensive
discussion of this notion and the corresponding theory. We refer the
reader to the reference monograph by {\sc Ambrosio, Gigli, \& Savar\'e}
\cite{Ambrosio08} for details and limit ourselves in
observing that \eqref{eq:curves} exactly corresponds to \eqref{eq:generic} upon
replacing the norm of the time derivative with the {\it metric
  derivative}
\begin{equation}
|u'|(t):=\lim_{s\to t} \frac{d(u(s),u(t))}{|s-t|}
\label{eq:metricd}
\end{equation}
and the norm of the gradient of the functional by its {\it local
  slope} at $u \in D(\phi):=\{v \in X \ : \ \phi(v) <\infty\}$ as
$$ |\partial \phi|(u) = \limsup_{v\to
  u}\frac{(\phi(u)-\phi(v))^+}{d(u,v)}.$$
Let us recall from \cite[Theorem 1.1.2]{Ambrosio08} that the limit in \eqref{eq:metricd} exists
almost everywhere for trajectories in the class
\begin{align*}
  AC^2(0,T;X):=\bigg\{ & u:[0,T] \to X :\exists\, m \in L^2(0,T): 
	\forall\, 0\le s<t\le T: \\
	& d(u(s),u(t)) \leq \int_s^t m(r)\,\d r\bigg\}.
\end{align*}
A continuous curve $\gamma:[0,1]\to X$ is called a {\it constant-speed
  geodesic} \cite[Definition 2.4.2]{Ambrosio08}
if $$d(\gamma(t),\gamma(s)) = d(\gamma(0),\gamma(1))|t-s|\quad \mbox{for all }
s,\, t \in [0,1].$$
Given $\lambda\in \Rz$, we say that a functional $\eta: X \to (-\infty,\infty]$ 
is $\lambda$-{\it geodesically convex} 
\cite[Def. 2.4.3]{Ambrosio08} if for all points $u_0,\,
u_1\in D(\eta)=\{x \in X \ : \  \eta(x)<\infty\}$, 
there exists a constant-speed geodesic
$\gamma$ connecting $u_0$ and $u_1$ such that 
\begin{align*} 
& \eta(\gamma(t))  \leq (1-t) \eta(u_0) + t \eta(u_1)
  -\frac{\lambda}{2}t(1-t) d^2(u_0,u_1) \quad \mbox{for all }t \in [0,1].
\end{align*} 

The De Giorgi scheme in the metric setting is defined in terms of the
functionals $\haz G_i :D(|\partial \phi|) \times D(|\partial \phi|)
\to (-\infty,\infty]$, given by
$$\haz G_i(u,v) = \phi(u) +\frac{1}{2\tau_i}d^2(u,v) +
\frac{\tau_i}{2} |\partial \phi|^2(u) - \phi(v),$$
where $D(|\partial \phi|):=\{v \in D(\phi)\ : \ |\partial \phi|(v)
<\infty\}$. Given a sequence of partitions, we use as before the notation
$$
\haz G_i^n(u,v) = \phi(u) +\frac{1}{2\tau_i^n}d^2(u,v) +
\frac{\tau_i^n}{2} |\partial \phi|^2(u) - \phi(v).
$$
Our existence and convergence result reads as follows.

\begin{theorem}[De Giorgi scheme for curves of maximal
  slope]\label{thm:existence4} 
	Let $\tau_*>0$ be such that for all $v\in X$ and $\tau\le\tau_*$, the mapping
  $u\mapsto d^2(u,v)/(2\tau)+\phi(u)$ is
  $(1/\tau)$-geodesically convex. Assume that $\phi $ has compact sublevels, $u \mapsto
  |\partial\phi|(u)\in [0,\infty]$ is
  lower semicontinuous, and the chain-rule inequality \begin{equation}
  \label{eq:later}
  | (\phi\circ u)'| \leq |\partial \phi|(u)|u'| \quad \text{a.e. in}
  \ (0,T)
\end{equation}
holds for all $u \in AC^2(0,T;X)$ such that $|\partial \phi|(u) |u'| \in
L^1(0,T)$. Let $u_{i-1}\in
  D(|\partial \phi|)$ be given with
$|\partial \phi| (u_{i-1}) \not = 0$. Then, for all $\tau_i\leq\tau_*$, there
exists $u_i\in D(|\partial \phi|)$  with $u_i \not = u_{i-1}$ and
\begin{equation}\label{eq:dgtp4}
\haz G_i(u_i, u_{i-1})\leq 0.
\end{equation}

Let $u_{i}^n\in D(|\partial
\phi|)$ be such that $u_0^n=u_0$ and 
\begin{equation} \sum_{i=1}^{N^n}  
\haz G_i^n(u_i^n,u_{i-1}^n)^+ \to 0 \ \ \text{as} \ \ n \to\infty. \label{eq:condi4}
\end{equation}
Then $\ove u^n (t)\to u(t)$ for all $t \in [0,T]$ for a subsequence 
(which is not relabeled), where $u \in AC^2(0,T;X)$ is
a curve of maximal slope for the functional $\phi$.
\end{theorem}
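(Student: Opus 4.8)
The plan is to follow the template of Theorems \ref{thm:existence} and \ref{thm:convergence}, replacing the Hilbertian Euler step by the metric \emph{minimizing movement} step and the subdifferential machinery by the metric notions of local slope and metric derivative. For the existence part, I would take $u_i := u^e$, where $u^e$ minimizes $u \mapsto d^2(u,u_{i-1})/(2\tau_i) + \phi(u)$; this minimizer exists by the Direct Method since $\phi$ has compact sublevels and $d^2(\cdot,u_{i-1})$ is continuous, and $u^e \neq u_{i-1}$ because $|\partial\phi|(u_{i-1}) \neq 0$ forbids $u_{i-1}$ from being a minimizer. The crux is then to verify $\haz G_i(u_i,u_{i-1}) \leq 0$, which I would obtain from two estimates. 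First, a \emph{slope bound}: the minimality inequality $\phi(u_i) - \phi(w) \leq (d^2(w,u_{i-1}) - d^2(u_i,u_{i-1}))/(2\tau_i)$ together with $|d(w,u_{i-1}) - d(u_i,u_{i-1})| \leq d(w,u_i)$ yields, upon taking the limsup as $w \to u_i$, the bound $|\partial\phi|(u_i) \leq d(u_i,u_{i-1})/\tau_i$. Second, an \emph{energy estimate}: since for $\tau_i \leq \tau_*$ the function $u \mapsto d^2(u,u_{i-1})/(2\tau_i) + \phi(u)$ is $(1/\tau_i)$-geodesically convex with minimizer $u_i$, testing the convexity inequality along the geodesic from $u_i$ to $u_{i-1}$ and letting the interpolation parameter tend to $0$ gives the strengthened bound $\phi(u_i) + d^2(u_i,u_{i-1})/\tau_i \leq \phi(u_{i-1})$. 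Inserting the slope bound into the definition of $\haz G_i$ and then invoking the energy estimate produces $\haz G_i(u_i,u_{i-1}) \leq \phi(u_i) + d^2(u_i,u_{i-1})/\tau_i - \phi(u_{i-1}) \leq 0$, which is \eqref{eq:dgtp4}.

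For the convergence part, summing the $\haz G_i^n$ as in \eqref{eq:bound} and using $\phi \geq 0$ together with condition \eqref{eq:condi4} yields, for $n$ large, the uniform bounds $\sup_m \phi(u_m^n) \leq C$, $\sum_i \tau_i^n(d(u_i^n,u_{i-1}^n)/\tau_i^n)^2 \leq C$, and $\sum_i \tau_i^n|\partial\phi|^2(u_i^n) \leq C$. The second bound controls the $L^2$-norm of the metric derivative of $\haz u^n$ and, via $d(\haz u^n(s),\haz u^n(t)) \leq |t-s|^{1/2}(\int_0^T|(\haz u^n)'|^2\d r)^{1/2}$, gives equicontinuity, while the first bound confines $\ove u^n(t)$ to a fixed compact sublevel of $\phi$ for each $t$. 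A refined Arzel\`a--Ascoli argument in the metric setting, as in \cite[Prop.~3.3.1]{Ambrosio08}, then extracts a subsequence with $\ove u^n(t) \to u(t)$ for all $t \in [0,T]$ and $u \in AC^2(0,T;X)$.

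It remains to pass to the limit in the discrete energy inequality
\[ \phi(\ove u^n(t)) + \frac12\int_0^t |(\haz u^n)'|^2\d r + \frac12\int_0^t|\partial\phi|^2(\ove u^n)\d r \leq \phi(u_0) + \sum_{i=1}^{N^n}\haz G_i^n(u_i^n,u_{i-1}^n)^+. \]
Using lower semicontinuity of $\phi$ under $\ove u^n(t) \to u(t)$, lower semicontinuity of the slope (assumed) combined with Fatou's lemma for its time integral, lower semicontinuity of the metric-velocity functional under pointwise convergence, and $\sum_i \haz G_i^n(u_i^n,u_{i-1}^n)^+ \to 0$, I would obtain $\phi(u(t)) + \frac12\int_0^t|u'|^2\d r + \frac12\int_0^t|\partial\phi|^2(u)\d r \leq \phi(u_0)$. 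Finally, the assumed chain-rule inequality \eqref{eq:later} supplies the reverse estimate
\[ \phi(u_0) - \phi(u(t)) = -\int_0^t(\phi\circ u)'\d r \leq \int_0^t|\partial\phi|(u)\,|u'|\d r \leq \frac12\int_0^t|u'|^2\d r + \frac12\int_0^t|\partial\phi|^2(u)\d r, \]
so that all inequalities collapse to equalities and \eqref{eq:curves} holds; in particular $\phi\circ u$ is nonincreasing, so $u$ is a curve of maximal slope.

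The main obstacle I anticipate lies in the convergence part: unlike the Hilbertian situation of Theorem \ref{thm:convergence}, there is no weak $H^1$ structure to exploit, so the metric compactness and, above all, the lower semicontinuity of the metric-velocity term under the weak mode of convergence available (pointwise in $t$ only) must be drawn entirely from the $AC^2$ theory and the lower semicontinuity results of \cite{Ambrosio08}. A secondary delicate point, in the existence part, is the correct use of the $(1/\tau_i)$-geodesic convexity to upgrade the elementary minimality bound $\phi(u_i) + d^2(u_i,u_{i-1})/(2\tau_i) \leq \phi(u_{i-1})$ to the stronger one carrying the full coefficient $1/\tau_i$, which is exactly what is needed to absorb the slope contribution in $\haz G_i$.
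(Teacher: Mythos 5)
Your proposal is correct and follows essentially the paper's own route: the two pillars of the existence step --- the slope estimate $|\partial\phi|(u^e)\le d(u^e,u_{i-1})/\tau_i$ and the strengthened energy inequality $\phi(u^e)+d^2(u^e,u_{i-1})/\tau_i\le\phi(u_{i-1})$ extracted from the $(1/\tau_i)$-geodesic convexity --- are exactly the estimates the paper invokes (as \cite[Lemma 3.1.3]{Ambrosio08} and \cite[Theorem 4.1.2.ii]{Ambrosio08}), and your from-scratch derivations of both are sound; the convergence part (uniform energy bounds, metric compactness from \cite{Ambrosio08}, lower semicontinuity to pass to the limit in the summed energy inequality, then the chain-rule inequality \eqref{eq:later} to upgrade the one-sided bound to the equality \eqref{eq:curves}) coincides with the paper's argument step by step.

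The one genuine difference is where the update $u_i$ comes from. You take $u_i:=u^e$, the Euler/minimizing-movement step, and verify $\haz G_i(u^e,u_{i-1})\le 0$ directly; the paper instead defines $u_i$ as a minimizer of $\haz G_i(\cdot,u_{i-1})$, i.e.\ as a solution of \eqref{eq:mini}, and uses $u^e$ only as a competitor, so that your chain of inequalities appears verbatim in the paper as the estimate of $\haz G_i(u_i^e,u_{i-1})$. Your variant is marginally more economical --- it does not require solving \eqref{eq:mini}, and hence does not need the lower semicontinuity of $|\partial\phi|$ to get existence of that minimizer --- whereas the paper's choice carries the extra information that the variational scheme \eqref{eq:mini} itself produces admissible updates, which is precisely the interpretation of the De Giorgi scheme as an alternative to \eqref{eq:minie} that the paper emphasizes after the theorem. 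Both choices satisfy the statement as written, and your argument that $u^e\neq u_{i-1}$ (minimality of $u_{i-1}$ would force $|\partial\phi|(u_{i-1})=0$) is a valid substitute for the paper's comparison $\haz G_i(u_i,u_{i-1})\le 0<\haz G_i(u_{i-1},u_{i-1})$.
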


A comment on these assumptions is in order. The geodesic convexity
of the squared distance corresponds to the assumption that the metric
space is {\it nonpositively curved}. This is the case of Euclidean and
Hilbert spaces, as well as Riemannian manifolds of
nonpositive sectional curvature \cite{Ambrosio08}.
As regards the functional $\phi$, let us recall Corollary 2.4.10 in \cite{Ambrosio08}, 
which ensures that $|\partial \phi|$ is
lower semicontinuous and the chain-rule inequality holds whenever
$\phi$ is $0$-geodesically convex. In particular, the
$(1/\tau)$-geodesical convexity of  $u\mapsto d^2(u,v)/(2\tau)+\phi(u)$
follows when $\phi$ is $0$-geodesically convex and the metric space
is nonpositively curved \cite[Remark 4.0.2]{Ambrosio08}.
 The assumptions can be
weakened by requiring compactness with respect to a weaker
topology, which could allow for the extension of the result to reflexive
Banach spaces \cite[Remark 2.0.5]{Ambrosio08}.

Theorem \ref{thm:existence4} shows that
\begin{align}
    u_i \in \argmin_{u \in D(|\partial \phi|)}\left(\phi(u) +
    \frac{1}{2\tau_i} d^2(u,u_{i-1}) +\frac{\tau_i}{2} |\partial
    \phi|^2(u) - \phi(u_{i-1})\right) \label{eq:mini}
\end{align}
for $i=1,\ldots,N$
is an alternative approximation for curves of maximal slope with respect 
to the classical Euler scheme
\begin{align} 
    u_i^e \in \argmin_{u \in D(|\partial \phi|)}\left(\phi(u) +
    \frac{1}{2\tau_i} d^2(u,u_{i-1}) - \phi(u_{i-1})\right) \label{eq:minie}
\end{align} 
for $i=1,\ldots,N$. 
 Note that the compactness of the sublevels of $\phi$ and the
lower semicontinuity of $|\partial\phi|$ imply that solutions to the
minimum problem \eqref{eq:mini} exist.

\begin{proof}[Proof of Theorem \ref{thm:existence4}]
Let $u_i^e$ be a solution to
\eqref{eq:minie}. We will use the {\it slope estimate} \cite[Lemma 3.1.3]{Ambrosio08}
\begin{equation}
  \label{eq:slope}
  |\partial\phi|(u^e_i) \leq \frac{1}{\tau_i}d(u_i^e,u_{i-1})
\end{equation}
as well as the following consequence of the $(1/\tau)$-geodesic
convexity of the functional
$u \mapsto d^2(u,u_{i-1})/(2\tau)+\phi(u)$ \cite[Theorem 4.1.2.ii]{Ambrosio08}
(choose $v=u=u_{i-1}$, $u_\tau=u_i^e$, and $\lambda=0$ in the notation of
the cited theorem):
\begin{equation}
  \label{eq:evi}
  \phi(u_i^e) +\frac{1}{\tau_i} d^2(u_i^e,u_{i-1}) - \phi(u_{i-1})\leq 0.
\end{equation}

Let $u_i$ solve \eqref{eq:mini}. We deduce from the minimality of $u_i$ and 
estimates \eqref{eq:slope} and \eqref{eq:evi} that 
\begin{align}
  \haz G_i(u_i,u_{i-1}) &\leq \haz G_i(u_i^e,u_{i-1}) \nonumber\\
&= \phi(u_i^e)
  +\frac{1}{2\tau_i} d^2(u_i^e,u_{i-1}) + \frac{\tau_i}{2}|\partial
  \phi|^2(u^e_i)- \phi(u_{i-1})\nonumber\\
& \leq \phi(u_i^e)
  +\frac{1}{2\tau_i} d^2(u_i^e,u_{i-1})   +\frac{1}{2\tau_i}
  d^2(u_i^e,u_{i-1}) - \phi(u_{i-1})\nonumber\\
&= \phi(u_i^e)
  +\frac{1}{\tau_i} d^2(u_i^e,u_{i-1})  - \phi(u_{i-1})
	\leq 0, \nonumber  
\end{align}
and \eqref{eq:dgtp4} follows. If $|\partial \phi|(u_{i-1})\not = 0$, we
have $\haz G_i(u_i,u_{i-1})\leq 0 < \tau_i|\partial \phi|^2(u_{i-1})/2 = \haz
G_i(u_{i-1},u_{i-1})$. Hence, $u_i\not = u_{i-1}$.


Next, let $u_i^n$ fulfill relation \eqref{eq:condi4}. We take the sum
for $i=1,\dots,m$:
\begin{align}
\phi(u_m^n) +  \sum_{i=1}^m
 \frac{d^2(u_i^n,u_{i-1}^n) }{2\tau_i^n}+  \sum_{i=1}^m \frac{\tau_i^n}{2}
|\partial \phi|^2(u_i^n) -\phi(u_0)
= \sum_{i=1}^m \haz
G_i^n(u^n_i,u^n_{i-1})^+.\label{eq:cms}
\end{align}
As the right-hand side is bounded uniformly with respect to $n$,
one can follow the proof of \cite[Theorem 2.3.3]{Ambrosio08} and extract
a subsequence $\ove u^n$ (not relabeled) such that $\ove u^n \to u$
pointwise in $[0,T]$, $|(\ove
u^n)'| \to |u'| $ weakly in $L^2(0,T)$, and for all $t \in [0,T]$,
$$\phi(u(t)) \leq \liminf_{n\to\infty}\phi(\ove u^n(t)), \quad
|\partial\phi|(u(t)) \leq \liminf_{n\to\infty}|\partial \phi|(\ove
u^n(t)).$$
Passing to the limit inferior in relation \eqref{eq:cms} leads to
\begin{align}
\phi(u(t)) &+ \frac12 \int_0^t |u'|^2(s)\, \d s + \frac12 \int_0^t |\partial
\phi|^2(u(s)) \, \d s - \phi(u_0)\nonumber\\
&\leq \liminf_{n\to \infty} \sum_{i=1}^{N^n}\haz
G_i^n(u_i^n,u_{i-1}^n)^+ = 0\label{eq:get}
\end{align}
for all $t \in [0,T]$.
 Eventually, we use \eqref{eq:get} and the  chain-rule inequality \eqref{eq:later}
 to obtain 
\begin{align*}
\frac12 \int_0^t &|u'|^2(s)\, \d s + \frac12 \int_0^t |\partial
\phi|^2(u(s)) \, \d s \leq -\phi(u(t)) +
  \phi(u_0) \\
&= -\int_0^t(\phi \circ u)'(s)\, \d s \leq \int_0^t|\partial
\phi|(u(s)) |u'|(s)\, \d s \\
&\leq \frac12 \int_0^t |u'|^2(s)\, \d s + \frac12 \int_0^t |\partial
\phi|^2(u(s)) \, \d s 
\end{align*}
so that all inequalities are actually equalities and $u$ solves \eqref{eq:curves}.
\end{proof}


\section{A comparison between the Gonzales and the De Giorgi
  scheme}\label{sec:compare}

Let us close this discussion by presenting a direct comparison of the output
of the Gonzales and the De Giorgi schemes in the case of the single
scalar ODE problem
$$u'+\lambda u = 0, \ \ u(0)=1$$
for $\lambda>0$ given. This
corresponds to the gradient flow of the uniformly convex
potential $\phi(u) = \lambda u^2/2$.

Given the time steps $\tau_i$, one readily finds the solutions $e_i$ of the
Euler scheme, $g_i$ of the Gonzales scheme, $u_i$ of the De Giorgi
scheme $G_i(u_i,u_{i-1})=0$, and $v_i$ of the De Giorgi scheme $\min
G_i(\cdot,v_{i-1})$, see \eqref{eq:schemes}, as 
\begin{align*}
  &e_i = \prod_{j=0}^i\frac{1}{1+\tau_i\lambda}, \quad g_i =
  \prod_{j=0}^i\frac{2-\tau_i \lambda}{2+\tau_i\lambda}, \\
& u_i
  =\prod_{j=0}^i\frac{1-(\tau_i\lambda)^{3/2}}{1+\tau_i\lambda+(\tau_i\lambda)^2}, \quad v_i =\prod_{j=0}^i\frac{1}{1+\tau_i\lambda+(\tau_i\lambda)^2}
 .
\end{align*}
Observe that $u_i$ and $g_i$ oscillate in sign for $\tau_i \lambda
>1$ and $\tau_i\lambda>2$, respectively, whereas $e_i$ and $v_i$ are always positive. 

Figure \ref{fig:compare1} records the
performance of the methods in terms of order of convergence for
$\lambda =1$.
\begin{figure}[ht]
 \centering 
\pgfdeclareimage[width=95mm]{compare1}{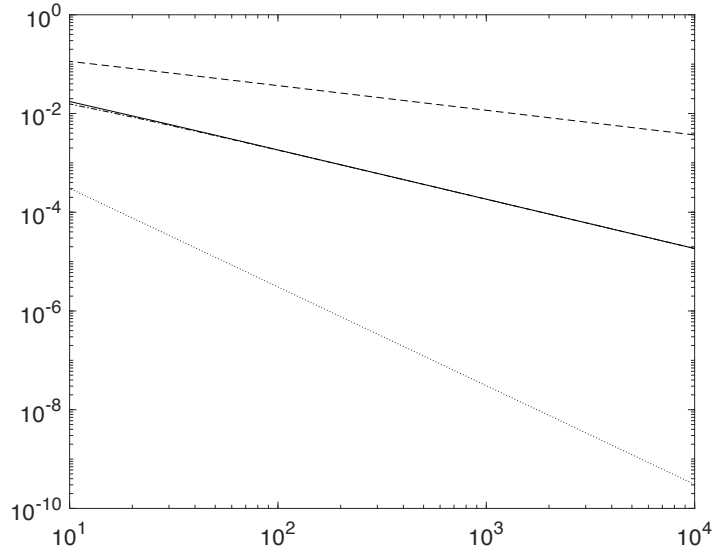} 
\pgfuseimage{compare1}
\caption{$L^\infty$ error on $[0,1]$ of the difference between the
  exact solution $u(t)=\e^{-t}$ of
$u'+ u=0$, $u(0)=1$ and the discrete solutions $e_i$ (solid line), $g_i$ (dotted line),
$v_i$ (dash-dotted line), and $u_i$ (dashed line) as a function of $1/\tau$ in 
log-log scale.}
\label{fig:compare1}
\end{figure}
In accordance to Theorem \ref{thm:r2}.iii, the Gonzales scheme is of order $\tau^2$, the Euler and the De~Giorgi scheme $v_i$ are of order $\tau$, and the De Giorgi scheme $u_i$ is
of order $\tau^{1/2}$. We recall that the
second-order convergence of the Gonzales scheme is limited to the
scalar case only. In more than one dimension, the Gonzales scheme is
only of
order $\tau$ if condition \eqref{eq:ansgar} does not hold, see Figure
\ref{fig:errorgo}.

Let us now assess the performance of the Gonzales and the De Giorgi schemes in order to compute the minimum of $\phi$.
We illustrate in Figure
\ref{fig:compare2} the reduction of the potential along iterations.
\begin{figure}[ht]
 \centering 
\pgfdeclareimage[width=65mm]{compare2a}{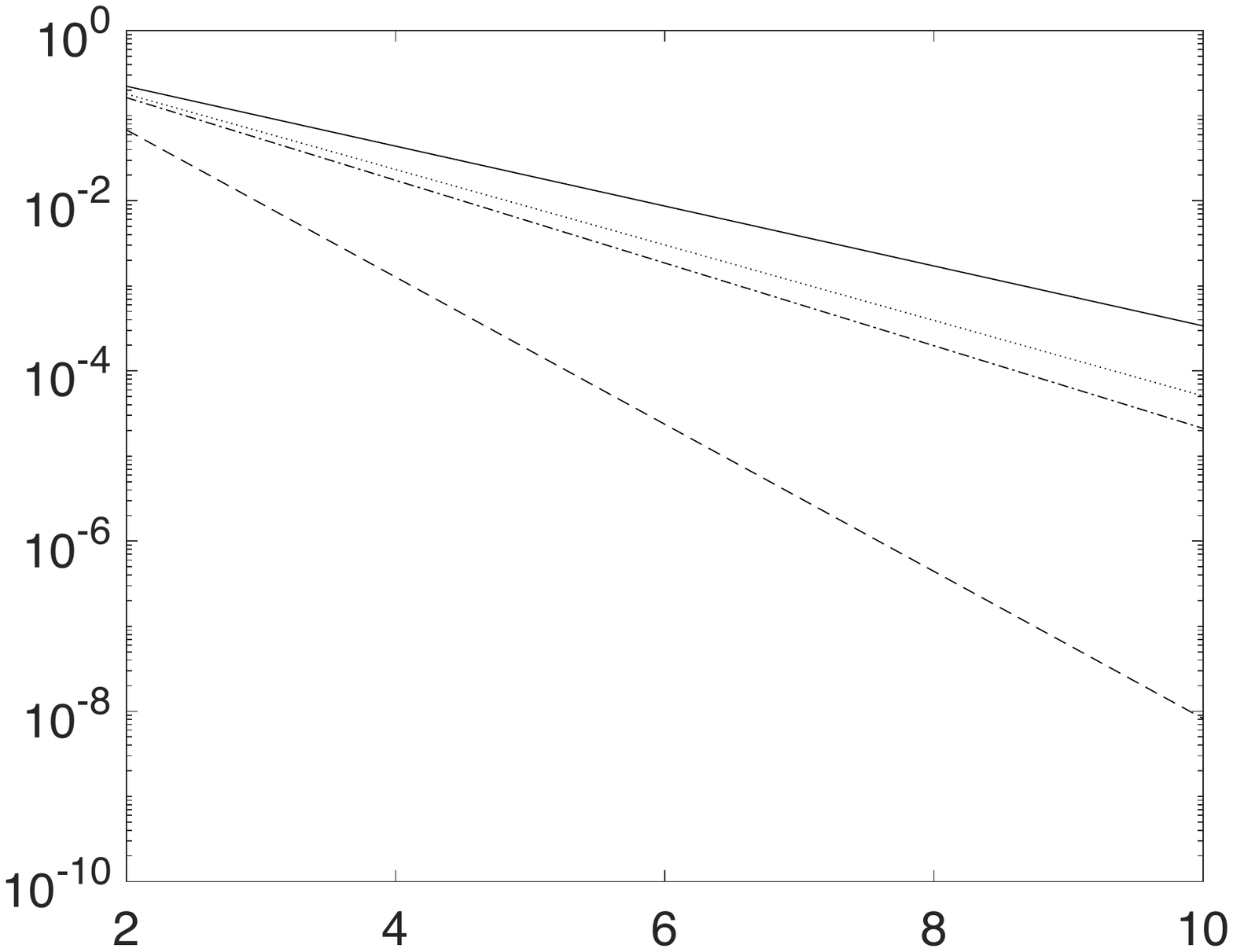} 
\pgfdeclareimage[width=65mm]{compare2b}{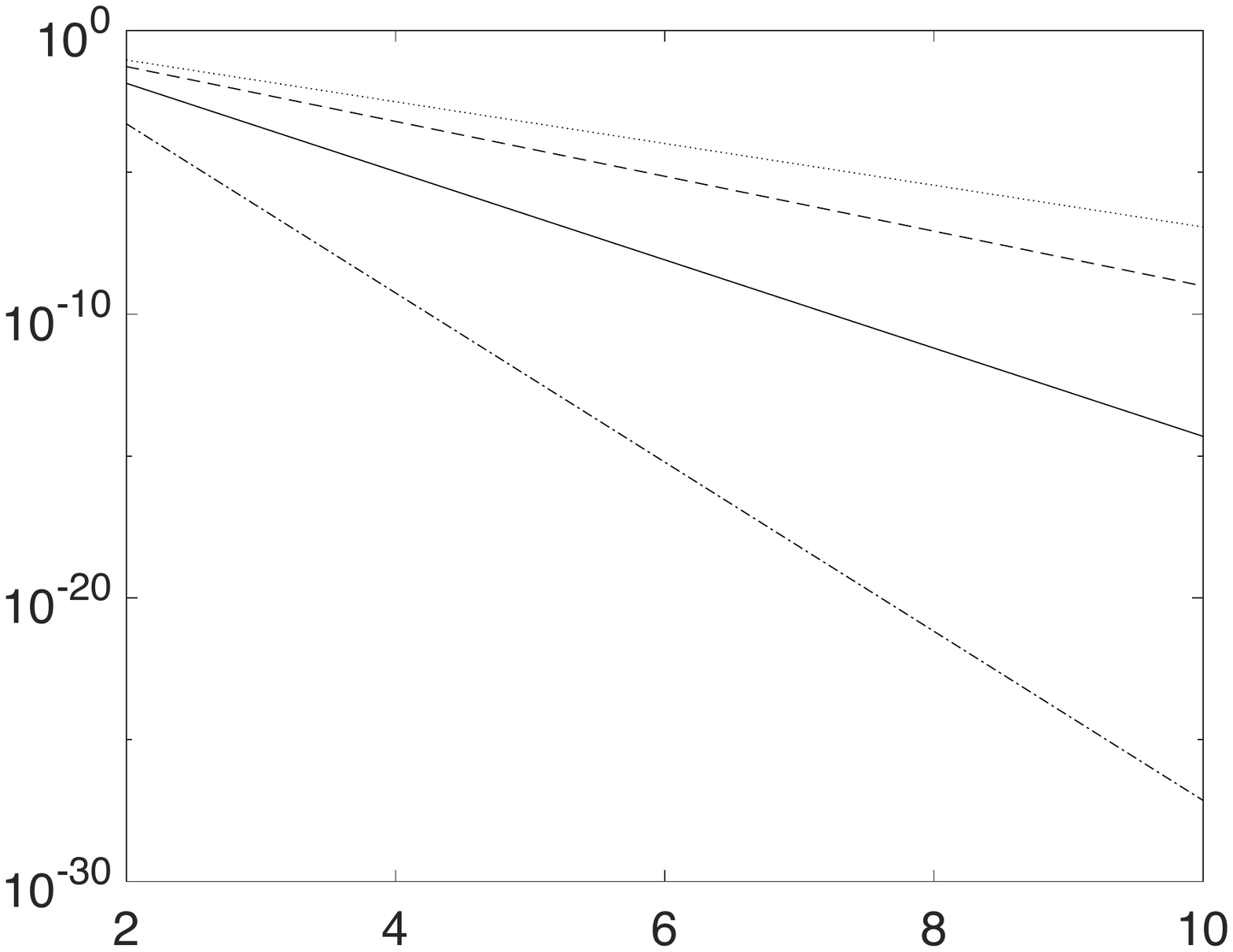} 
\centering
\pgfuseimage{compare2a}\pgfuseimage{compare2b}
\caption{Behavior of the potential $\phi(u)=\lambda u^2/2$ with $\lambda=1$ along iterations for
  $\tau\lambda=0.5$ (left) $\tau\lambda=5$ (right) and  in semilog scale; 
	$e_i$ (solid line), $g_i$ (dotted line),
  $v_i$ (dash-dotted line), and $u_i$ (dashed line).}
\label{fig:compare2}
\end{figure}
The performance of the Gonzales scheme and the De Giorgi scheme $u_i$
depends on the size of  $\tau \lambda$.
The De Giorgi scheme $v_i$ outperforms the Euler method regardless of the size of
$\tau \lambda$.  This fact is additionally illustrated in Figure
\ref{fig:compare3}, where we record the effect of the size of $\tau
\lambda$ on the effective reduction of the potential after a fixed
number of iterations.
\begin{figure}[ht]
 \centering 
\pgfdeclareimage[width=95mm]{compare3}{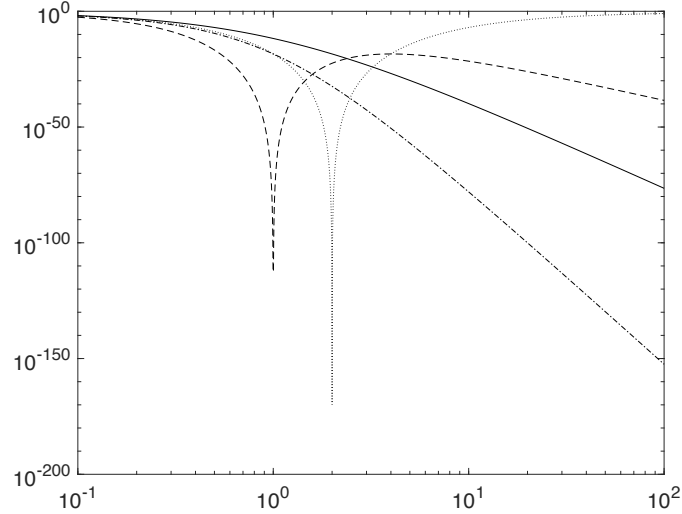} 
\centering
\pgfuseimage{compare3}
\caption{Reduction of the potential $\phi(u)=\lambda u^2/2$ with $\lambda=1$ after 20 iterations as a function
  of $ \tau\lambda$ in log-log scale; $e_i$ (solid line), $g_i$ (dotted line),
  $v_i$ (dash-dotted line), and $u_i$ (dashed line).}
\label{fig:compare3}
\end{figure}
One can observe how the Gonzales scheme delivers a strong energy reduction for $\tau \lambda<2$ and
virtually none for large $\tau \lambda$. Compared to the Euler method, both De
Giorgi schemes are more energy-reducing for large $\tau
\lambda$. While the De Giorgi scheme $u_i$ shows some singular
behavior at $\tau \lambda=1$, the De Giorgi scheme $v_i$ delivers an energy reduction regardless of the size of $\tau \lambda$, eventually outperforming the Euler method.


\section*{Acknowledgements}  
This research is supported by Austrian Science Fund (FWF) project F\,65. 
AJ is partially supported by the FWF projects P\,27352, P\,30000, and W\,1245. 
US is partially supported by the FWF projects I\,2375 and  P\,27052 and by the 
Vienna Science and Technology Fund (WWTF) through Project
MA14-009.  Some interesting remarks from the referees in the
direction of optimization algorithms are gratefully acknowledged. 


\end{document}